\newtheorem{theorem}{Theorem}[section]
\newtheorem{corollary}[theorem]{Corollary}
\newtheorem{question}[theorem]{Question}
\newtheorem{lemma}[theorem]{Lemma}
\theoremstyle{definition}
\newtheorem{definition}{Definition}[section]
\theoremstyle{definition}
\newtheorem{remark}[theorem]{Remark}
\newcommand\ssm{\smallsetminus}
\newcommand\bs{\boldsymbol}
\newcommand{\Z}{\ensuremath{\mathbb{Z}}}
\newcommand{\T}{\ensuremath{\mathbb{T}}}
\newcommand\HFh{\mathit{\widehat{HF}}}
\newcommand\CFh{\mathit{\widehat{CF}}}
\newcommand\HFKh{\mathit{\widehat{HFK}}}
\newcommand\CFKh{\mathit{\widehat{CFK}}}
\newcommand\Sym{{\rm {Sym}}}
\newcommand\ob{\mathfrak{ob}}
\newcommand\rk{\textrm{rk}\,}
\newcommand{\N}{\mathbb{N}}
\newcommand{\F}{\mathbb{F}}
\newcommand{\id}{\operatorname{id}}
\begin{document}

\title[{A note on the knot Floer homology of fibered knots}]{A note on the knot Floer homology of fibered knots}

\author[John A. Baldwin]{John A. Baldwin}
\address{Department of Mathematics \\ Boston College}
\email{john.baldwin@bc.edu}
\urladdr{\href{https://www2.bc.edu/john-baldwin/}{https://www2.bc.edu/john-baldwin/}}

\author[David Shea Vela-Vick]{David Shea Vela--Vick}
\address{Department of Mathematics \\ Louisiana State University}
\email{shea@math.lsu.edu}
\urladdr{\href{http://www.math.lsu.edu/~shea}{http://www.math.lsu.edu/\~{}shea}}

\thanks{JAB was partially supported by NSF Grant DMS-1406383 and NSF CAREER Grant DMS-1454865.\\
\indent DSV was partially supported by NSF Grant DMS-1249708 and Simons Foundation Grant 524876.}

\maketitle


\begin{abstract}
We prove that the knot Floer homology of a fibered knot is nontrivial in its next-to-top Alexander grading. Immediate applications include new proofs of Krcatovich's  result that knots with $L$-space surgeries are prime and  Hedden and Watson's   result that the rank of knot Floer homology detects the trefoil among knots in the 3--sphere. We also generalize the latter result, proving a similar theorem for nullhomologous knots in any 3--manifold. We note that our    method of  proof  inspired Baldwin and Sivek's recent proof that Khovanov homology detects the trefoils. 
As part of this work, we also introduce a numerical refinement of the Ozsv{\'a}th-Szab{\'o} contact invariant. This refinement was the inspiration for  Hubbard and Saltz's  annular refinement of Plamenevskaya's transverse link invariant in Khovanov homology. 
\end{abstract}

\section{Introduction}
Knot Floer homology is a powerful  invariant  of knots defined by Ozsv{\'a}th and Szab{\'o} \cite{OS3} and by Rasmussen \cite{Ra}. The most basic version  of this invariant assigns to a knot $K$ in a closed, oriented 3--manifold $Y$ a vector space \[\HFKh(Y,K)\] over the field $\F=\Z/2\Z$. If $K$ is nullhomologous with Seifert surface $\Sigma$ then this vector space may be endowed with an  Alexander grading, \[\HFKh(Y,K)=\bigoplus_{i=-g(\Sigma)}^{g(\Sigma)}\HFKh(Y,K,[\Sigma],i), \]  which depends only the relative homology class of the surface in $ H_2(Y,K).$ We will  omit this class from the notation when it is unambiguous, as when $Y$ is a rational homology 3--sphere. This Alexander grading is symmetric in the sense that \[\HFKh(Y,K,[\Sigma],i)\cong\HFKh(Y,K,[\Sigma],-i) \text{ for all i}.\] Much of the power of knot Floer homology owes to its relationship with the genus  and fiberedness of knots. For instance, Ozsv{\'a}th and Szab{\'o} proved \cite{OS4} that if $K\subset Y$ is fibered with fiber $\Sigma$ then \[\HFKh(Y,K,[\Sigma],g(\Sigma))\cong \F.\] Moreover, knot Floer homology completely detects genus \cite{OS-genus} and fiberedness \cite{ghiggini-fibered,yi-fibered} for knots in $S^3$. Specifically, 
\begin{align}
\label{eqn:genus}&\HFKh(S^3,K,g(K))\neq 0\textrm{ and } \HFKh(S^3,K,i)=0 \textrm{ for } i>g(K),\\
\label{eqn:fibered}&\HFKh(S^3,K,g(K))\cong \F \textrm{ if and only if } K \textrm{ is fibered}.
\end{align} 
These facts can be used to prove that knots in $S^3$ with lens space surgeries are fibered \cite{OS-lens}, and that the trefoil and figure eight knots are characterized by   their Dehn surgeries \cite{OS-dehn}.

The results above are all in some way concerned with the summand of knot Floer homology in the top Alexander grading. Much less is known about the summands in other  gradings. Our main result is that the knot Floer homology of a fibered knot is  nontrivial in its next-to-top Alexander grading.

\begin{theorem}
\label{thm:main}
Suppose $K\subset Y$ is a genus $g>0$ fibered knot with fiber  $\Sigma$. Then the summand $\HFKh(Y,K,[\Sigma],g-1)$ is nonzero.
\end{theorem}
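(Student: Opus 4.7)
The plan is to work directly with a Heegaard diagram for $(Y,K)$ coming from the open book decomposition of $Y$ determined by the fibration $Y \setminus K \to S^1$. Fix such an open book $(\Sigma, \phi)$ with $\Sigma$ a genus $g$ surface with one boundary component (the binding $K$). Choose a basis of properly embedded arcs $\{a_1,\dots,a_{2g}\}$ for $\Sigma$ together with small Hamiltonian perturbations $\{b_1,\dots,b_{2g}\}$, and form the standard doubly pointed Heegaard diagram via the Ozsváth–Szabó / Honda–Kazez–Matić construction, with Heegaard surface $\Sigma \cup_\partial -\Sigma$, curves $\alpha_i = a_i \cup_\partial a_i$, $\beta_i = b_i \cup_\partial \phi(b_i)$, and basepoints $w,z$ on either side of $K$. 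In this diagram the unique intersection tuple $\mathbf{x}_0 = (x_1,\dots,x_{2g})$ lying entirely on the front page $\Sigma$ is the contact class generator; it sits in Alexander grading $g$ and represents the nontrivial element of $\HFKh(Y,K,g)\cong\F$.

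The first step is to produce candidate generators in Alexander grading $g-1$. For each index $i$ for which $\phi(b_i)\cap a_i$ contains a point $y_i$ on $-\Sigma$ beyond the contact intersection $x_i$, define
\[
\mathbf{y}_i := (x_1,\dots,x_{i-1},y_i,x_{i+1},\dots,x_{2g}).
\]
The Alexander grading formula, applied to intersection points that differ from $\mathbf{x}_0$ in exactly one coordinate pushed to the back page, places each $\mathbf{y}_i$ in grading $g-1$. Since $g>0$ and the monodromy of a genus $g$ fibered knot cannot act trivially on every basis arc, at least one such $\mathbf{y}_i$ exists. I would then enumerate the local contributions to $\partial \mathbf{y}_i$ in grading $g-1$ using small bigons and rectangles read off from $\phi(b_i)$ relative to the $a_j$, and look for an $\F$-linear combination $\mathbf{y} = \sum c_i \mathbf{y}_i$ that is a cycle in $\CFKh(Y,K,g-1)$.

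The main obstacle is then to show that $\mathbf{y}$ is not a boundary. My proposed approach is to leverage the numerical refinement of the Ozsváth–Szabó contact invariant mentioned in the abstract. Concretely, for the Alexander filtration $\mathcal{F}_\bullet \subset \CFh(-Y)$ let
\[
\mathfrak{n}(Y,K,\Sigma) := \max\bigl\{\,k \ge 0 : [\mathbf{x}_0] \text{ admits a representative in } \mathcal{F}_{g-k} + \partial \CFh(-Y)\,\bigr\}.
\]
The class $c(\xi_K) = [\mathbf{x}_0]$ is detected at filtration level $g$, and the refinement measures how far the contact class can be pushed down through the Alexander filtration by adding exact terms. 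The strategy is to show that the vanishing $\HFKh(Y,K,g-1)=0$ would force the filtration connecting map at level $g-1$ to be an isomorphism on the top portion of $H_*(\mathcal{F}_\bullet)$, which, combined with the fact that $\mathbf{x}_0$ is a nontrivial cycle and the chain level identification of $\mathbf{y}$ as a perturbation of $\mathbf{x}_0$ by a concrete local bypass-like move on the open book, yields a contradiction. Equivalently, one may pass to a positive stabilization of $(\Sigma,\phi)$ and trace how the contact class generator in the stabilized diagram interacts with the stabilization disk to exhibit $\mathbf{y}$ as being homologous to a class detectable by $c(\xi_K)$.

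The hard part is Step 3: isolating the algebraic mechanism, likely packaged as the refined contact invariant, that forces a nonzero class in grading $g-1$ without relying on Euler characteristic (which can vanish, e.g., when the monodromy has trace zero on $H_1(\Sigma)$). I expect this to rest on a careful chain-level comparison with a stabilized open book, where the geometry of the stabilization produces an explicit chain homotopy linking the contact class to $\mathbf{y}$.
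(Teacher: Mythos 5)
Your proposal shares the paper's overall setting---an HKM-style Heegaard diagram built from the open book, with the contact generator in the extremal Alexander grading and a perturbation of it one grading over---but the two load-bearing steps are missing, and the reasoning you sketch for them would not go through.

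First, the existence of the right ``perturbed'' generator. You define $\mathbf{y}_i$ by replacing one coordinate of the contact generator with a point $y_i \in \phi(b_i)\cap a_i$ on the back page, and assert that at least one such generator exists because ``the monodromy cannot act trivially on every basis arc.'' This conflates two different issues. If $\varphi = \id$ there is \emph{no} generator off the contact generator at all (the paper handles this case separately, citing the direct computation of $\HFKh$ for binding connected sums from \cite[Section 9]{OS3}). If $\varphi \ne \id$, the mere existence of an intersection point on the back page does not give you a generator that can close up into a cycle with a controlled differential. The paper's mechanism is sharper: by the orientation-reversal symmetry $\HFKh(Y,K,[\Sigma],i) \cong \HFKh(-Y,-K,[-\Sigma],-i)$, one may assume $\varphi$ is not right-veering; then Honda--Kazez--Mati\'c guarantee a \emph{nonseparating} arc $a_1$ sent to the left, and after completing it to a basis the curves $\alpha_1,\beta_1$ cobound an honest bigon. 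That bigon produces a generator $\mathbf{d}$ with $\partial\mathbf{d} = \mathbf{c}$ exactly---not up to lower-order corrections you then have to organize into a cycle.

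Second, and more seriously, nonvanishing. The whole point of the theorem is that $\HFKh$ is nonzero in grading $g-1$ even when $c(\xi) = 0$, which is the generic situation for non-right-veering monodromy. Your argument that $\mathbf{y}$ is not a boundary leans on a refinement $\mathfrak{n}(Y,K,\Sigma)$ of the contact invariant and on being able to exhibit $\mathbf{y}$ as ``a class detectable by $c(\xi_K)$,'' but $c(\xi_K)$ itself may be zero, so this gives no traction. What the paper actually needs and proves (Theorem~\ref{thm:nonzero}) is that the contact generator represents the nonzero class in the \emph{associated graded} $\HFKh(-Y,-K,[-\Sigma],-g)\cong\F$, for every fibered knot, regardless of whether $c(\xi)$ survives to $\HFh(-Y)$. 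That fact requires real work: independence of the class under arcslides in the complement of the basepoint, naturality under adding negative Dehn twists proved via the surgery exact triangle (and a genus argument showing the third term vanishes), and a reduction to the Stein fillable case $\varphi = \tau_\delta^n$ where $c(\xi)\ne 0$. Once that is in place, the paper's argument is a clean application of $\partial^2 = 0$: if $\mathbf{d}$ were a boundary in the associated graded at $1-g$, then $\partial\mathbf{e} = \mathbf{d} + \mathbf{f}$ with $\mathbf{f}$ in grading $-g$, and $\partial^2\mathbf{e}=0$ forces $\partial\mathbf{f} = \mathbf{c}$, contradicting Theorem~\ref{thm:nonzero}. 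Your sketch gestures at a filtration/spectral-sequence argument and at stabilizations, neither of which appears in the actual proof, and neither of which you make precise enough to see whether it could substitute for the missing nonvanishing input.
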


While this result may seem a bit technical, we will see below that it can immediately be put to several interesting ends. We  remark that an analogue of Theorem~\ref{thm:main} was recently proven by Baldwin and Sivek \cite{bs-trefoil} in the setting of instanton knot Floer homology, and was used in combination with work of Kronheimer and Mrowka \cite{km-khovanov} to prove that Khovanov homology detects the trefoils. Baldwin and Sivek's proof is based in part on our proof of Theorem~\ref{thm:main}.

\begin{remark}
Although our discussion will focus exclusively on knots, we remark that the conclusion of Theorem~\ref{thm:main} holds for fibered links as well, via the process of \emph{knotiffication}. Indeed, if $K \subset Y$ is a fibered link, then the fibration naturally extends to one of its knotiffication $\kappa(K) \subset \kappa(Y)$. For more information on this construction, we refer the reader to \cite{OS3}.
\end{remark}

\subsection{Applications}

Suppose $Y$ is a rational homology 3--sphere. The  Heegaard Floer homology of $Y$ is bounded in rank by the number of elements in the first homology of $Y$, \[\HFh(Y)\geq |H_1(Y)|.\] An \emph{$L$-space} is a rational homology 3--sphere $Y$ for which  this inequality is sharp, \[\HFh(Y)= |H_1(Y)|.\]  $L$-spaces include lens spaces and all other 3--manifolds with elliptic geometry. A great deal of effort has been devoted to understanding how $L$-spaces  arise via  surgery. A knot $K\subset S^3$ is called an \emph{$L$-space knot} if some Dehn surgery on $K$  is an $L$-space.  Ozsv{\'a}th and Szab{\'o} proved in \cite{OS-lens} that knot Floer homology provides a strong obstruction to having an $L$-space surgery. Namely, if $K$ is an $L$-space knot then \begin{equation}\label{eqn:constraint}\rk\HFKh(S^3,K,i)= 0\textrm{ or } 1\end{equation} for each  Alexander grading $i$. Combined with the genus detection  \eqref{eqn:genus} and fiberedness detection \eqref{eqn:fibered}, this implies in particular that $L$-space knots are fibered. 

 Theorem~\ref{thm:main} therefore imposes the following additional constraint, first proven by Hedden and Watson by quite different means in \cite[Corollary 9]{HW}.\footnote{Hedden and Watson credit Rasmussen as the first to observe this.}

\begin{corollary}
If $K$ is an $L$-space knot then $\HFKh(S^3,K,g(K)-1)\cong \F$.
\end{corollary}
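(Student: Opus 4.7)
The plan is to deduce the corollary directly from Theorem~\ref{thm:main} combined with the rank constraint \eqref{eqn:constraint} for $L$-space knots. The statement is only meaningful when $g(K)>0$, so I would first implicitly restrict to this case (for the unknot there is nothing to prove).

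My first step would be to recall the observation, noted in the paragraph preceding the corollary, that any $L$-space knot is fibered. Indeed, \eqref{eqn:genus} gives $\HFKh(S^3,K,g(K))\neq 0$, while the rank constraint \eqref{eqn:constraint} forces $\rk\HFKh(S^3,K,g(K))\leq 1$. Together these imply $\HFKh(S^3,K,g(K))\cong \F$, and then the fiberedness detection \eqref{eqn:fibered} says $K$ is fibered.

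Having established that $K$ is fibered of positive genus, I would apply Theorem~\ref{thm:main} to conclude that
\[
\HFKh(S^3,K,g(K)-1)\neq 0.
\]
The rank constraint \eqref{eqn:constraint} bounds the rank of this summand by one, so combining the two yields $\HFKh(S^3,K,g(K)-1)\cong \F$, as desired.

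There is no real obstacle in this argument; the entire substance of the corollary is already packaged into Theorem~\ref{thm:main}. The point of recording it is simply to observe that Theorem~\ref{thm:main} automatically sharpens the previously known constraints on the knot Floer homology of $L$-space knots in the top Alexander grading so as to also pin down the next-to-top summand.
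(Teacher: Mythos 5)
Your argument is correct and matches the paper's intent exactly: the paper presents this corollary as an immediate consequence of Theorem~\ref{thm:main} together with the rank constraint \eqref{eqn:constraint} and the fact (from \eqref{eqn:genus}, \eqref{eqn:fibered}) that $L$-space knots are fibered, and supplies no separate proof in Section~\ref{sec:cor}. Your note that one implicitly restricts to $g(K)>0$ (i.e.\ excludes the unknot) is also a reasonable reading of the statement.
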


Theorem~\ref{thm:main} also enables a new and immediate proof of the following result, which was first established by Krcatovich using his \emph{reduced} knot Floer complex in \cite[Theorem 1.2]{Kr}.

\begin{corollary}
\label{cor:prime}
$L$-space knots are prime.
\end{corollary}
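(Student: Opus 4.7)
The plan is a short argument by contradiction. I would suppose that $K \subset S^3$ is a composite $L$-space knot with nontrivial connect-summands $K = K_1 \# K_2$, and derive a contradiction with the $L$-space rank constraint \eqref{eqn:constraint}. Writing $g_i := g(K_i) \geq 1$, additivity of genus under connected sum yields $g(K) = g_1 + g_2$, and the target is to show that $\rk \HFKh(S^3, K, g(K) - 1) \geq 2$.

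The first step is to upgrade the fiberedness of $K$ to fiberedness of each $K_i$. Since $K$ is an $L$-space knot, \eqref{eqn:constraint} together with \eqref{eqn:genus} forces $\HFKh(S^3, K, g(K)) \cong \F$, so $K$ is fibered by \eqref{eqn:fibered}. Feeding this into the connected-sum Künneth formula for knot Floer homology,
\[
\HFKh(S^3, K_1 \# K_2, i) \cong \bigoplus_{i_1 + i_2 = i} \HFKh(S^3, K_1, i_1) \otimes \HFKh(S^3, K_2, i_2),
\]
at $i = g(K)$ forces each $\HFKh(S^3, K_j, g_j) \cong \F$, whence $K_j$ is itself fibered, again by \eqref{eqn:fibered}.

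The second step is to apply Theorem~\ref{thm:main} to each summand: since each $K_j$ is a fibered knot of positive genus $g_j \geq 1$, we get $\HFKh(S^3, K_j, g_j - 1) \neq 0$. Examining the Künneth formula at total Alexander grading $g(K) - 1$, the only pairs $(i_1, i_2)$ with $i_1 + i_2 = g_1 + g_2 - 1$ and $|i_j| \leq g_j$ for each $j$ are $(g_1, g_2 - 1)$ and $(g_1 - 1, g_2)$, and both contribute nonzero tensor products. Since these sit in distinct bigraded summands of the Künneth decomposition, their ranks add to give $\rk \HFKh(S^3, K, g(K) - 1) \geq 2$, the desired contradiction.

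There is no serious obstacle beyond Theorem~\ref{thm:main} itself; everything else is a routine rank count together with a little bigrading bookkeeping. The only small care required is in confirming that no other $(i_1, i_2)$ contributes to the next-to-top summand, which is immediate from the vanishing $\HFKh(S^3, K_j, i_j) = 0$ for $|i_j| > g_j$.
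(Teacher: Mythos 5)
Your proof is correct and follows essentially the same route as the paper's: contradiction via the K\"unneth formula and Theorem~\ref{thm:main}, comparing the rank in Alexander grading $g(K)-1$ against the $L$-space constraint \eqref{eqn:constraint}. The only small divergence is that the paper cites Gabai's result that connected summands of a fibered knot are fibered, whereas you deduce $\HFKh(S^3,K_j,g_j)\cong\F$ directly from the K\"unneth formula at top grading and then invoke \eqref{eqn:fibered}; this is a clean, slightly more self-contained substitute, but the argument is otherwise the same.
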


Theorem~\ref{thm:main}  provides for a simple proof of the following as well, which was first established by Hedden and Watson in \cite[Corollary 8]{HW} using Rasmussen's $h$-invariants.

\begin{corollary}
\label{cor:trefoil} $\rk\HFKh(S^3,K) = 3$ iff $K$ is a trefoil.
\end{corollary}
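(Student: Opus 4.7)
The plan is to prove the nontrivial direction, namely that $\rk\HFKh(S^3,K)=3$ forces $K$ to be a trefoil; the converse is a standard direct calculation. So assume $\rk\HFKh(S^3,K)=3$. Since the unknot is the unique knot with $\rk\HFKh=1$, the knot $K$ is nontrivial, and by the genus detection property~\eqref{eqn:genus} we have $g:=g(K)\geq 1$ and $\HFKh(S^3,K,g)\neq 0$.

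First I would pin down the rank in the top Alexander grading. Let $a=\rk\HFKh(S^3,K,g)$. By the symmetry $\HFKh(S^3,K,i)\cong\HFKh(S^3,K,-i)$, the gradings $\pm g$ together contribute $2a$ to the total rank, so $2a\leq 3$, forcing $a=1$. The fiberedness detection property~\eqref{eqn:fibered} then implies that $K$ is fibered. Now I would invoke Theorem~\ref{thm:main}: since $K$ is a fibered knot of positive genus, $\HFKh(S^3,K,g-1)\neq 0$, and by symmetry $\HFKh(S^3,K,-(g-1))\neq 0$ as well.

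The next step is to bound the genus. If $g\geq 2$, then $g$, $g-1$, $-(g-1)$, $-g$ are four distinct Alexander gradings each with nonzero summand, forcing $\rk\HFKh(S^3,K)\geq 4$ and contradicting our assumption. Therefore $g=1$, and $K$ is a fibered knot in $S^3$ of genus one. The classical classification of such knots (due to Burde--Zieschang, obtained by enumerating mapping classes of the once-punctured torus whose mapping torus, suitably closed, is $S^3$) tells us that $K$ is either a left-handed trefoil, a right-handed trefoil, or the figure eight knot. Since a direct calculation gives $\rk\HFKh(S^3,\text{figure eight})=5$, the knot $K$ must be a trefoil.

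The main obstacles are largely bookkeeping: organizing the symmetry and parity constraints on the Alexander-graded ranks, and invoking the right external input (fiberedness detection plus the classification of genus one fibered knots in $S^3$). Theorem~\ref{thm:main} is what makes the argument work cleanly, since without it one cannot rule out fibered knots of genus two or more whose knot Floer homology is concentrated in gradings $\pm g$ and $0$.
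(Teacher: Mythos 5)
Your proposal is correct and follows essentially the same route as the paper: genus detection and symmetry pin the nonzero Alexander gradings to $0,\pm g$ with rank one at the top, fiberedness detection makes $K$ fibered, Theorem~\ref{thm:main} then forces $g=1$, and the classification of genus one fibered knots plus the rank-$5$ computation for the figure eight leaves only the trefoils. The only differences are cosmetic (you make the rank bookkeeping slightly more explicit and cite Burde--Zieschang for the genus one classification), so there is nothing substantive to flag.
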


In fact, we  are able to prove the following more general result by a combination of  Theorem~\ref{thm:main} with the results of \cite{Ba2}.

\begin{corollary}
\label{cor:general}
Suppose $K\subset Y$ is a nullhomologous knot with irreducible complement. Then $\rk\HFKh(Y,K)=3$ iff $K$ is one of the following:
\begin{itemize}
\item a trefoil in $S^3$,
\item the core of $(+1)$-surgery on the right-handed trefoil, 
\item the core of $(-1)$-surgery on the left-handed trefoil.
\end{itemize}
\end{corollary}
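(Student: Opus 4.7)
The plan is to use Theorem~\ref{thm:main} together with genus and fiberedness detection to reduce to the case of a genus--one fibered knot whose knot Floer homology has rank $1$ in each of the Alexander gradings $-1,0,1$, and then to invoke the computations of \cite{Ba2} to enumerate the possibilities.

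The reverse direction is a collection of direct computations: the trefoil has $\rk\HFKh(S^3,T)=3$ classically, and the cores of $\pm 1$--surgery on the right/left--handed trefoils sit as nullhomologous genus--one fibered knots in $\pm\Sigma(2,3,5)$ whose knot Floer homology can be calculated (e.g.\ via the surgery exact triangle) to have rank $3$. Their complements are irreducible, since removing the core of the surgery restores the trefoil exterior.

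For the forward direction, let $K\subset Y$ be nullhomologous with irreducible complement and $\rk\HFKh(Y,K)=3$, and let $g^*$ be the top Alexander grading in which $\HFKh(Y,K)$ is nonzero. By the genus detection theorem (applicable here since the complement is irreducible), $g^*=g(K)$. The symmetry of the Alexander grading together with the rank bound forces $\HFKh(Y,K,\pm g^*)\cong\F$, so by Ni's fiberedness detection $K$ is fibered. If $g^*\geq 2$, then Theorem~\ref{thm:main} combined with the symmetry produces nontrivial summands in the four distinct gradings $\pm g^*,\pm(g^*-1)$, contradicting $\rk\HFKh(Y,K)=3$. If $g^*=0$, then $K$ bounds an embedded disk; a short argument using irreducibility of the complement forces $(Y,K)\cong(S^3,U)$, whose $\HFKh$ has rank $1$. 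Hence $g^*=1$, and $\HFKh(Y,K)$ is one--dimensional in each of the gradings $-1,0,1$.

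What remains is to classify nullhomologous genus--one fibered knots $K\subset Y$ with this knot Floer profile. Any such $K$ is the binding of an open book with once--punctured torus page, and the ambient manifold $Y$ is determined by the conjugacy class of the monodromy in $\mathrm{MCG}(T^2\smallsetminus D^2)\cong SL(2,\Z)$. The paper \cite{Ba2} computes the Heegaard Floer invariants of every manifold arising this way in terms of this conjugacy class; running through the periodic, reducible, and pseudo--Anosov conjugacy classes and extracting those compatible with our rank--$3$ constraint is expected to leave precisely the three pairs in the statement. This final enumeration is the main obstacle, while everything before it is a short application of Theorem~\ref{thm:main} together with established detection theorems.
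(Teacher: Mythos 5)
Your reduction to a genus-one fibered knot is essentially the paper's, but you skip two steps that the paper uses crucially and that make the final enumeration tractable.

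First, the paper opens by noting that the spectral sequence from $\HFKh(Y,K)$ to $\HFh(Y)$ forces $\rk\HFh(Y)\in\{1,3\}$, so in particular $Y$ is a rational homology $3$--sphere. This is not mere bookkeeping: the genus and fiberedness detection results of Ni that you invoke are applied in the paper precisely for nullhomologous knots with irreducible complement inside rational homology spheres, and your appeal to them needs this hypothesis in place before $g^*=g(K)$ and fiberedness can be concluded.

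Second, and more substantively, once you know $K$ is a genus-one fibered knot you still need to determine which genus-one open books arise. The paper does \emph{not} run through all conjugacy classes in the mapping class group of the once-punctured torus. Instead it uses Remark~\ref{rmk:idtrivial} (the nontriviality of the $d_1$ differential when $\varphi\neq\id$) to conclude that $\rk\HFKh(Y,K)\neq\rk\HFh(Y)$ unless $\varphi=\id$, which would give $Y\cong\#^2(S^1\times S^2)$. This forces $\rk\HFh(Y)=1$, i.e.\ $Y$ is an integer homology sphere $L$--space, and it is \emph{that} fact which lets one cite the classification of genus-one fibered knots in $L$--spaces from \cite[Theorem 4.1]{Ba2}. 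Your proposal neither establishes that $Y$ is an $L$--space nor carries out the alternative brute-force enumeration over all of $\mathrm{MCG}$; you flag this as ``the main obstacle,'' and indeed without the $L$--space reduction there is no citable theorem in \cite{Ba2} that finishes the argument. (Incidentally, the mapping class group of a once-punctured torus rel boundary is the braid group $B_3$, a central extension of $SL(2,\Z)$ by the boundary twist, not $SL(2,\Z)$ itself, so the set being enumerated is also slightly larger than you suggest.) Filling the gap requires exactly the spectral-sequence observations from Remark~\ref{rmk:idtrivial} and the opening paragraph of the paper's proof.
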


\subsection{Antecedents} As mentioned in \cite{HW}, Rasmussen was the first to observe that $L$-space knots have nontrivial knot Floer homology in the next-to-top Alexander grading. Hedden and Watson proved a result akin to Theorem~\ref{thm:main} for knots in $S^3$ under some additional  assumptions on the $\tau$ invariant and the knot Floer homology in the top Alexander grading:

\begin{theorem}[{\cite[Theorem 7]{HW}}]
\label{thm:hw}
Suppose $K\subset S^3$ is a knot of genus $g>1$. If 
\[\tau(K)=g \textrm{ and }
\HFKh_{-1}(S^3,K,g)=0\]
then $\HFKh_{-1}(S^3,K,g-1)\neq 0$. Here, the subscripts denote the Maslov grading.
\end{theorem}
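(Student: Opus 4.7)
The plan is to analyze the two-step quotient chain complex $T := \CFKh(K)/F_{g-2}$, where $F_\bullet$ is the Alexander filtration on $\CFKh(K)$. Recall that $H_*(\CFKh(K)) \cong \HFh(S^3) \cong \F_{(0)}$, and the hypothesis $\tau(K)=g$ means that the generator of this homology is represented by a cycle in $F_g\CFKh(K)$ but not in $F_{g-1}\CFKh(K)$; in particular, it descends to a distinguished nonzero class $x \in \HFKh_0(K,g)$.

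First I would compute $H_{-1}(T)$ in two different ways. The short exact sequence $0 \to F_{g-1}/F_{g-2} \to T \to F_g/F_{g-1} \to 0$ yields the long exact sequence
\[
\cdots \to \HFKh_n(K,g-1) \to H_n(T) \to \HFKh_n(K,g) \xrightarrow{d_1} \HFKh_{n-1}(K,g-1) \to \cdots,
\]
in which $d_1$ is the first differential in the spectral sequence associated to the Alexander filtration. Specializing to $n=-1$ and using the hypothesis $\HFKh_{-1}(K,g)=0$ to vanish the next term, this isolates
\[
H_{-1}(T) \;\cong\; \coker\bigl(d_1\colon \HFKh_0(K,g) \to \HFKh_{-1}(K,g-1)\bigr).
\]
Separately, the short exact sequence $0 \to F_{g-2} \to \CFKh(K) \to T \to 0$, combined with the fact that $\tau(K)=g > g-2$ forces the map $H_*(F_{g-2}) \to \F_{(0)}$ to be identically zero, yields
\[
H_{-1}(T) \;\cong\; H_{-2}(F_{g-2}).
\]
Comparing these, the desired conclusion $\HFKh_{-1}(K,g-1) \neq 0$ would follow immediately from the nonvanishing of $H_{-2}(F_{g-2})$, since a cokernel can only be nonzero when its target is.

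The heart of the proof is then to show $H_{-2}(F_{g-2}) \neq 0$. The natural tool is the conjugation symmetry $\HFKh_m(K,s) \cong \HFKh_{m-2s}(K,-s)$, induced by a filtered chain equivalence $\CFKh(K) \simeq \CFKh(K)^\vee$ (with Alexander-grading-dependent Maslov shifts) that reverses the filtration direction. This identifies the subcomplex $F_{g-2}\CFKh(K)$ with the dual of the quotient $\CFKh(K)/F_{-g+1}\CFKh(K)$, so that $H_{-2}(F_{g-2})$ corresponds to a specific homology group of this top quotient. The hypothesis $\tau(K)=g$ ensures that the generator of $\HFh(S^3)$ persists in the top quotient, while $\HFKh_{-1}(K,g)=0$ rules out the chain-level obstructions that would otherwise force that generator to be a boundary in the dual picture.

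The main obstacle is executing the duality step precisely, keeping careful track of the Maslov-grading shifts (which are non-uniform due to their Alexander-grading dependence). A complementary, more hands-on approach would instead analyze the cancellation of the symmetric partner $y \in \HFKh_{-2g}(K,-g)$ of $x$ directly in the spectral sequence: since Alexander gradings are bounded below by $-g$, the class $y$ must be killed by an incoming differential $d_r w = y$ for some $r \geq 1$ and some $w$ arising from $\HFKh_{-2g+1}(K,-g+r)$. When $r=1$, the conjugation symmetry gives $\HFKh_{-2g+1}(K,-g+1) \cong \HFKh_{-1}(K,g-1) \neq 0$, which is precisely the conclusion. The remaining work in this alternative route is to use the hypothesis $\HFKh_{-1}(K,g)=0$ to rule out the higher-page cancellations with $r \geq 2$.
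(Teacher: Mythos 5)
First, a point of orientation: the paper does not prove this statement. It is quoted from Hedden and Watson, and the authors explicitly remark that the proof in \cite{HW} (via Rasmussen's $h$-invariants) is conceptually different from anything done here. So your plan has to stand on its own, and it does not: both routes correctly reduce the problem and then defer precisely the step that contains all of the content. Your exact-sequence bookkeeping is fine --- with $\HFKh_{-1}(S^3,K,g)=0$ you do get $H_{-1}(T)\cong\coker(d_1)$ and $H_{-1}(T)\cong H_{-2}(F_{g-2})$ --- but the finishing move fails. The ``conjugation symmetry'' is not a duality pairing a subcomplex of $\cf(S^3)$ with a quotient of the same complex; it is an automorphism of $CFK^\infty$ swapping the $i$- and $j$-filtrations, and it identifies $F_{g-2}=C\{i=0,\ j\le g-2\}$ with the \emph{subcomplex} $C\{j=0,\ i\le g-2\}$ of the horizontal complex. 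Running your same two exact sequences on that side (using $H_*(C\{j=0\})\cong\F_{(0)}$ and, once more, $\HFKh_{-1}(S^3,K,g)=0$) expresses $H_{-2}(F_{g-2})$ again as the cokernel of a map into $\HFKh_{-1}(S^3,K,g-1)$ --- the horizontal $d_1$ this time. The symmetry returns you exactly to your starting point and cannot produce an independent nonvanishing statement. Route B has the same hole: the sources of the higher differentials $d_r$, $r\ge 2$, that could kill $y$ live in $\HFKh_{-2g+1}(S^3,K,-g+r)\cong\HFKh_{1-2r}(S^3,K,g-r)$, and the hypothesis $\HFKh_{-1}(S^3,K,g)=0$ says nothing about these groups.

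The gap is not a technicality that more careful grading bookkeeping would fix: no argument using only the formal inputs you allow yourself can succeed. Consider the rank-three complex over $\F$ with generators $a,b,\bar a$ in (Alexander, Maslov) bigradings $(2,0)$, $(0,-3)$, $(-2,-4)$, with $CFK^\infty$-differential $\partial b=U^2a+\bar a$ (one vertical and one horizontal arrow, each of length two). This complex is conjugation-symmetric, has $H_*(\cf)\cong\F_{(0)}$ and $H_*(CFK^\infty)\cong\F[U,U^{-1}]$, has top nonzero Alexander grading $g=2$, and satisfies $\tau=2=g$ and $\HFKh_{-1}(K,2)=0$; yet $\HFKh(K,1)=0$. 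Here $y=\bar a$ is killed by a $d_2$ emanating from $b$, which is exactly the $r\ge 2$ case you need to exclude, and $H_{-2}(F_0)=0$. Of course this complex is not realized by any knot --- that is what the theorem asserts --- but any proof must therefore input something beyond the filtered chain homotopy type and its symmetry: Hedden and Watson use the surgery formula and $h$-invariants, and this paper's Theorem~\ref{thm:main} would rule out the example above via fiberedness detection. Your plan, as written, contains no such input.
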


As alluded to above, Hedden and Watson used this result in \cite{HW} to prove that the rank of knot Floer homology detects the trefoil. Though it was not observed in \cite{HW}, their result is also strong enough to show that $L$-space knots are prime, by an argument similar to ours. We emphasize that Hedden and Watson's proof of  Theorem~\ref{thm:hw} is conceptually very different from our proof of Theorem~\ref{thm:main}.

\subsection{Refining  the contact invariant}
The contact invariant in Heegaard Floer homology, defined by Ozsv{\'a}th and Szab{\'o} in \cite{OS4}, assigns to a contact structure $\xi$ on $Y$ a class \[c(\xi)\in\HFh(-Y).\] This class vanishes when $\xi$ is overtwisted. Thus, to prove that a contact structure $\xi$ is tight, it suffices to show that $c(\xi)\neq 0$. This basic principle enabled the classification of Seifert fibered spaces which admit tight contact structures, for instance \cite{LS}. The contact invariant  does not completely detect tightness, however. For example, it vanishes for tight contact manifolds with positive Giroux torsion \cite{GHV}. This paper began as an attempt to develop a refinement of the contact invariant which can obstruct overtwistedness even when the invariant vanishes. We describe our approach below, but do not develop it further in this paper.

Suppose $K$ is the connected binding of an open book $\ob$ compatible with $(Y,\xi)$. The knot $-K\subset -Y$ gives rise to a filtration of the Heegaard Floer complex of $-Y$ which, up to filtered chain homotopy equivalence, takes the form \[\F\langle\mathbf{c}\rangle=\mathscr{F}_{-g}\subset\mathscr{F}_{1-g}\subset \dots\subset \mathscr{F}_g = \CFh(-Y).\] The contact invariant $c(\xi)$ is defined in \cite{OS4} as \[c(\xi):=[\mathbf{c}]\in\mathit{H}_*(\CFh(-Y),\partial) = \HFh(-Y).\] Therefore, if $c(\xi)=0$ then the class $[\mathbf{c}]$ vanishes in the homology of some filtration level. We assign a number $b(\ob)\in\N\cup\{\infty\}$ to this open book which records where this class vanishes, \[b(\ob) :=
\begin{cases}
\infty,& c(\xi)\neq 0,\\
g+\min\{k\mid[\mathbf{c}] = 0\textrm{ in } \mathit{H}_*(\mathscr{F}_k)\},&c(\xi)\neq 0.
\end{cases}\]
One may then  define an invariant of $\xi$ by minimizing over compatible open books, \[b(\xi) = \min\{b(\ob)\mid \ob \textrm{ compatible with } (Y,\xi)\}.\] We prove the following.

\begin{theorem}
\label{thm:b}If $\ob$ is not right-veering then $b(\ob)=1$.
\end{theorem}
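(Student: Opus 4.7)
The plan is to use the non-right-veering hypothesis to construct, in the Honda-Kazez-Mati\'{c} (HKM) Heegaard diagram associated to $\ob = (\Sigma,\phi)$, an explicit generator $\mathbf{y}$ in Alexander grading $1-g$ whose differential yields the contact cycle $\mathbf{c}$.

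I begin by recalling the HKM pointed Heegaard diagram for $(-Y, -K)$ built from $\ob$: one chooses a basis of arcs $\{a_1,\ldots,a_{2g}\}$ on $\Sigma$, lets $\boldsymbol\alpha$ and $\boldsymbol\beta$ be the curves obtained by doubling the $a_i$ and doubling their $\phi$-images respectively, and places the basepoints $z, w$ near the binding. The contact cycle $\mathbf{c} = (c_1,\ldots,c_{2g})$ consists of the intersection points on each $\alpha_i \cap \beta_i$ adjacent to $\partial \Sigma$, so $\mathbf{c} \in \mathscr{F}_{-g}$. Because $\ob$ is not right-veering, there exists a properly embedded arc $a$ on $\Sigma$ with $\phi(a)$ lying to the left of $a$ at one of their shared boundary endpoints. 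After adjusting the arc basis, I may take $a = a_i$ for some $i$. In the resulting diagram, $\alpha_i$ and $\beta_i$ cobound a small embedded bigon $D$ near $\partial \Sigma$ whose corners are $c_i$ and a neighboring intersection point $c_i'$, and whose interior contains the basepoint $z$ tracking the $-K$ filtration (but not $w$). Setting
\[
\mathbf{y} = (c_1,\ldots,c_{i-1},c_i',c_{i+1},\ldots,c_{2g}),
\]
the relative Alexander grading formula $A(\mathbf{y}) - A(\mathbf{c}) = n_z(D) - n_w(D) = 1$ gives $A(\mathbf{y}) = 1-g$, so $\mathbf{y} \in \mathscr{F}_{1-g}$, and the bigon $D$ has a unique holomorphic representative contributing $\mathbf{c}$ to $\partial \mathbf{y}$ with multiplicity one.

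To conclude $[\mathbf{c}] = 0$ in $H_*(\mathscr{F}_{1-g})$, I would apply the long exact sequence of the filtered pair $(\mathscr{F}_{1-g}, \mathscr{F}_{-g})$:
\[
\cdots \to H_*(\mathscr{F}_{-g}) \xrightarrow{\iota_*} H_*(\mathscr{F}_{1-g}) \to H_*(\mathscr{F}_{1-g}/\mathscr{F}_{-g}) \xrightarrow{\delta} H_{*-1}(\mathscr{F}_{-g}) \to \cdots.
\]
It suffices to exhibit a cycle in $\mathscr{F}_{1-g}/\mathscr{F}_{-g}$ whose connecting image is $[\mathbf{c}]$. The latter identity is supplied immediately by the bigon $D$, once one knows that $\mathbf{y}$ is a cycle modulo $\mathscr{F}_{-g}$. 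The main obstacle, and the heart of the argument, is therefore to verify that $\partial \mathbf{y}$ has no component in Alexander grading exactly $1-g$. I expect to establish this by a careful local analysis of the HKM diagram near $\partial \Sigma$: the rigidity of the diagram in this region, together with the constraint that any generator of grading $1-g$ must differ from $\mathbf{c}$ in essentially one coordinate, should force every holomorphic disk from $\mathbf{y}$ either to cross $w$ (so that its contribution is killed in $\CFh$) or to appear in cancelling pairs. As a fallback, one may alter $\mathbf{y}$ by a boundary within $\mathscr{F}_{1-g}$ to kill any residual grading-$(1-g)$ terms in $\partial \mathbf{y}$.
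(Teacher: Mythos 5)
Your construction of the bigon generator $\mathbf y$ (the paper's $\mathbf d$) and the argument that it kills $[\mathbf c]$ in $H_*(\mathscr F_{1-g})$ is exactly the paper's mechanism, and the LES route to $\iota_*([\mathbf c])=0$ is fine. However, your plan proves only the \emph{upper} bound $b(\ob)\le 1$. Since $\mathscr F_{-g-1}=\emptyset$, the definition $b(\ob)=g+\min\{k : [\mathbf c]=0 \text{ in } H_*(\mathscr F_k)\}$ allows $b(\ob)=0$, and to rule that out you must also prove $[\mathbf c]\neq 0$ in $H_*(\mathscr F_{-g})\cong\HFKh(-Y,-K,[-\Sigma],-g)\cong\F$. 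This is not automatic: it is precisely Theorem~\ref{thm:nonzero} of the paper, whose proof takes up most of the section and requires arcslide-invariance of the contact cycle with a fixed basepoint (Lemma~\ref{lem:arcslide}), naturality of $[\mathbf c]$ under adding negative Dehn twists via cobordism maps, a surgery exact triangle in the bottom Alexander grading, and reduction to the Stein-fillable case $\varphi=\tau_\delta^n$. Your proposal nowhere addresses this, so as written it does not establish $b(\ob)=1$.

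Two smaller issues. First, you write ``after adjusting the arc basis, I may take $a=a_i$,'' but an arbitrary arc sent to the left cannot necessarily be completed to a basis; you need the arc to be \emph{nonseparating}, and the fact that a non-right-veering $\varphi$ sends some nonseparating arc to the left is a nontrivial observation due to Honda--Kazez--Mati\'c (\cite[Proof of Lemma 3.2]{HKM1}), which should be cited rather than swept into a basis adjustment. Second, your ``fallback'' of altering $\mathbf y$ by a boundary within $\mathscr F_{1-g}$ cannot change $\partial\mathbf y$ at all, so it does nothing toward killing unwanted grading-$(1-g)$ terms; the actual justification, as in the paper, is that the same local rigidity argument near $\partial\Sigma$ that shows $\partial\mathbf c=0$ also forces $\partial\mathbf d=\mathbf c$ on the nose (no holomorphic disk avoiding $w$ can have corners at $c_2,\dots,c_{2g}$). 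In particular the extra generality you reach for via the connecting homomorphism is unnecessary once you have the exact chain-level identity $\partial\mathbf d=\mathbf c$.
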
  This fact is  what inspired and led to the proof of Theorem~\ref{thm:main}. It also  implies the following since every overtwisted contact manifold has a supporting open book with connected binding which is not right-veering.
\begin{corollary}
\label{cor:b}
If $(Y,\xi)$ is overtwisted then $b(\xi)=1$.
\end{corollary}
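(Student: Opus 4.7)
The plan is to adapt Honda--Kazez--Matic's argument showing that non-right-veering open books force the contact invariant to vanish, and to refine it so as to pin down the Alexander filtration level at which $\mathbf{c}$ becomes a boundary. Concretely, I will exhibit a chain $\mathbf{x}$ in filtration level $1-g$ together with an embedded bigon in a carefully chosen Heegaard diagram showing that $\mathbf{c}$ lies in the image of $\partial$ inside $\mathscr{F}_{1-g}$.

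First, I would set up a Honda--Kazez--Matic Heegaard diagram for $\ob$. Since $\ob$ is not right-veering, there is a properly embedded arc $a_1 \subset \Sigma$ whose image $\phi(a_1)$ lies to the left of $a_1$ at some endpoint on the binding. Extend $a_1$ to a basis of arcs $\{a_1, \ldots, a_n\}$ for the page $\Sigma$ and form the standard HKM diagram on $\Sigma \cup_\partial (-\Sigma)$, with $\alpha_i$ coming from $a_i$ and $\beta_i$ from $\phi(a_i)$ pushed slightly to the right of $a_i$ near $\partial\Sigma$. Position the basepoints $w$ and $z$ near the binding on either side of $a_1$.

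The curves $\alpha_1$ and $\beta_1$ meet in the usual contact-class corner $c_1$ near the right-veering endpoint of $a_1$, together with a secondary intersection $x_1$ forced by the leftward excursion of $\phi(a_1)$. Let $\mathbf{c} = (c_1, c_2, \ldots, c_n)$ and set $\mathbf{x} = (x_1, c_2, \ldots, c_n)$. The region bounded by an arc of $\alpha_1$ from $x_1$ to $c_1$ and an arc of $\beta_1$ from $c_1$ to $x_1$ is an embedded bigon $D$ in the Heegaard surface. A direct check of the basepoint placement shows $n_w(D) = 0$ and $n_z(D) = 1$, so $D$ contributes to the hat differential and satisfies $A(\mathbf{x}) - A(\mathbf{c}) = n_z(D) - n_w(D) = 1$. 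Hence $A(\mathbf{x}) = 1-g$ and $\mathbf{x} \in \mathscr{F}_{1-g}$.

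Because embedded bigons contribute $\pm 1$ to the differential in Heegaard Floer homology, the disk $D$ produces $\mathbf{c}$ in $\partial \mathbf{x}$ with odd multiplicity. The main obstacle is then to confirm that any other contributions to $\partial \mathbf{x}$, which necessarily lie in $\mathscr{F}_{1-g}$ since the differential respects the filtration, do not prevent $\mathbf{c}$ from being a boundary there. I would handle this either by arranging the diagram so that $D$ is the unique low-area disk emanating from $\mathbf{x}$ with $n_w = 0$, or by supplementing $\mathbf{x}$ with additional chains at Alexander grading $1-g$ that algebraically cancel the remaining terms of $\partial \mathbf{x}$. Once established, $[\mathbf{c}] = 0$ in $H_*(\mathscr{F}_{1-g})$, whereas $[\mathbf{c}] \neq 0$ in $H_*(\mathscr{F}_{-g}) = \F\langle\mathbf{c}\rangle$ since the latter has trivial differential. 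Therefore $\min\{k \mid [\mathbf{c}]=0 \text{ in } H_*(\mathscr{F}_k)\} = 1-g$, giving $b(\ob) = g + (1-g) = 1$.
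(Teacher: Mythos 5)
The paper proves this corollary in one line: since $(Y,\xi)$ is overtwisted it admits a supporting open book with connected binding that is not right-veering (a result of Honda--Kazez--Mati\'c), so Theorem~\ref{thm:b} gives $b(\ob)=1$; combined with $b(\ob')\geq 1$ for every compatible $\ob'$ (a consequence of Theorem~\ref{thm:nonzero}), this yields $b(\xi)=1$. You instead attempt to re-derive the content of Theorem~\ref{thm:b} from scratch rather than invoking it, and in doing so you inherit gaps that the paper fills earlier in a nontrivial way.

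The most serious gap is your final sentence, where you assert $H_*(\mathscr{F}_{-g}) = \F\langle\mathbf{c}\rangle$ ``since the latter has trivial differential.'' This tacitly claims that $\mathscr{F}_{-g}$ is one-dimensional, spanned by $\mathbf{c}$. That is false in general: by Lemma~\ref{lem:grading}, the generators at the bottom Alexander grading $-g$ are exactly those with all components lying in $\Sigma\subset S$, and there can be many such intersection points. What is true is that $H_*(\mathscr{F}_{-g})\cong\F$ (by fiberedness and Alexander symmetry), but whether the specific cycle $\mathbf{c}$ represents the generator rather than a boundary is precisely the content of Theorem~\ref{thm:nonzero}, which the paper proves via a careful argument involving basis-independence up to $z$-avoiding arcslides (Lemma~\ref{lem:arcslide}), naturality under negative Dehn twists, a surgery exact triangle, and reduction to the Stein-fillable case $\varphi=\tau_\delta^n$. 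Your proposal has no substitute for this. A secondary issue: you acknowledge but do not resolve the possibility that $\partial\mathbf{x}$ has terms other than $\mathbf{c}$; the paper resolves it by observing that the bigon is the sole contribution to $\partial\mathbf{d}$, by the same reasoning that shows $\mathbf{c}$ is a cycle (no $w$-avoiding disk can have a corner at $c_2,\dots,c_{2g}$), so that $\partial\mathbf{d}=\mathbf{c}$ on the nose. You should also explicitly invoke the result that an overtwisted $(Y,\xi)$ admits a non-right-veering supporting open book with \emph{connected} binding, since that is what licenses the choice of $\ob$ you take for granted.
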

Thus, to prove that $\xi$ is tight it suffices to show that $b(\xi)>1$.

Theorem~\ref{thm:b} also provides a simpler solution to the word problem in the mapping class group of a surface with connected boundary than in \cite{HW}. Indeed, suppose $\varphi$ is a diffeomorphism of $\Sigma$ which restricts to the identity on $\partial \Sigma$. Then $\varphi = \id$ if and only if both $\varphi$ and $\varphi^{-1}$ are right-veering, which leads to the following.

\begin{corollary}
 $\varphi = \id$ if and only if $b(\Sigma,\varphi)>1<b(\Sigma,\varphi^{-1})$.
\end{corollary}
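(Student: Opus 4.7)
The plan is to combine Theorem~\ref{thm:b} with the Honda-Kazez-Mati\'c characterization of the trivial mapping class: a boundary-fixing diffeomorphism $\varphi$ of $\Sigma$ satisfies $\varphi=\id$ in the mapping class group if and only if both $\varphi$ and $\varphi^{-1}$ are right-veering. Granting this, the corollary becomes a direct translation through the numerical invariant $b$.

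For the forward direction, I would assume $\varphi=\id$ and observe that the open book $\ob=(\Sigma,\id)$, which has connected binding since $\partial\Sigma$ is connected, supports the standard Stein fillable contact structure on $\#^{2g}(S^1\times S^2)$ obtained by attaching $2g$ one-handles to $B^4$. The Ozsv\'ath-Szab\'o contact invariant is nonzero for any Stein fillable contact manifold, so $c(\xi)\neq 0$, and by the definition of $b$ this forces $b(\Sigma,\id)=\infty>1$. The same argument applies verbatim to $\varphi^{-1}=\id$.

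For the converse I would argue by contrapositive. If $\varphi\neq \id$ in the mapping class group, then by the Honda-Kazez-Mati\'c characterization at least one of $\varphi$ or $\varphi^{-1}$ fails to be right-veering. Applying Theorem~\ref{thm:b} to whichever monodromy fails yields $b(\Sigma,\varphi)=1$ or $b(\Sigma,\varphi^{-1})=1$, directly contradicting the hypothesis that both invariants exceed $1$.

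There is no serious obstacle: Theorem~\ref{thm:b} does all of the real work, and the only minor point to verify is the nonvanishing of the contact invariant for the identity monodromy, which is immediate from Stein fillability. The value of the statement lies not in its proof but in giving a Heegaard-Floer-theoretic numerical criterion distinguishing the identity mapping class, which in principle is simpler to compute than having to check the right-veering condition on every properly embedded arc as in the Honda-Kazez-Mati\'c approach.
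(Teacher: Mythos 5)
Your proof is correct and follows the approach the paper itself outlines: the backward direction is exactly the combination of Theorem~\ref{thm:b} with the Honda--Kazez--Mati\'c fact that $\varphi=\id$ if and only if both $\varphi$ and $\varphi^{-1}$ are right-veering, and the forward direction (which the paper leaves implicit) is, as you say, immediate from the nonvanishing of $c(\xi)$ for the Stein fillable structure supported by $(\Sigma,\id)$, which forces $b(\Sigma,\id)=\infty$. You have in fact supplied slightly more detail than the paper, which states the corollary without an explicit argument beyond citing the right-veering characterization of the identity.
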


This invariant motivated the definition of an analogous refinement of Plamenevskaya's transverse invariant in Khovanov homology by  Hubbard and Saltz \cite{hubbard-saltz}. The number $b(\xi)$ is not \emph{a priori} a very calculable contact invariant; nevertheless, there are several interesting questions its construction raises, as described below.

\subsection{Questions}

It is thought  that the knot Floer homology of a fibered knot  in the next-to-top Alexander grading should be related to the symplectic Floer homology of the monodromy of the fibration. This raises the following.

\begin{question}
What implications does Theorem~\ref{thm:main} have for the symplectic Floer homology of mapping classes of a compact surface with connected boundary?
\end{question}

Motivated by  Hedden and Watson's Theorem~\ref{thm:hw}, we ask whether a version of Theorem~\ref{thm:main} holds for arbitrary (nonfibered) knots.

\begin{question}
Is the knot Floer homology of \emph{every}  knot in $S^3$  of positive genus nontrivial in its next-to-top Alexander grading?
\end{question}

Or even more generally, as Sivek has asked:

\begin{question}
Is it true for arbitrary knots $K\subset S^3$ of positive genus that \[\rk\HFKh(S^3,K,g(K)-1) \geq \rk\HFKh(S^3,K,g(K))?\]
\end{question}

Perhaps our Heegaard-diagrammatic proof of Theorem~\ref{thm:main} for fibered knots could be adapted to the setting of \emph{broken} fibrations to  answer  these questions.

Below are some questions about the $b(\ob)$ and $b(\xi)$.
\begin{question}
How does $b(\ob)$ behave under positive stabilization?
\end{question}

We  suspect that $b(\ob)$ is not invariant under positive stabilization. On the other hand, we believe that it is nondecreasing under positive stabilization and can increase by at most $1$. If true, then given  open books $\ob$ and $\ob'$ supporting the same contact structure, the difference $|b(\ob)-b(\ob')|$ provides a lower bound on the total number of positive stabilizations required to achieve a common stabilization.

\begin{question}
Are there contact structures with $\infty > b(\xi)>1$?
\end{question}

\begin{question}
Is $b(\xi)$ nondecreasing under Legendrian surgery?
\end{question}

A similar (but ultimately calculable) numerical refinement of the Heegaard Floer contact invariant was  defined by Kutluhan, Mati{\'c}, Van Horn-Morris, and Wand  in \cite{kmvw}, inspired by work of Hutchings in embedded contact homology \cite[Appendix]{LW}.

\begin{question}
Is there a relationship between $b(\xi)$ and the  invariant defined in \cite{kmvw}?
\end{question}

As mentioned above, $b(\ob)=1$ if $\ob$ is not right-veering. We ask the following.
\begin{question}
Is it the case that $b(\ob)=1$ if and only if $\ob$ is not right-veering?
\end{question} 

Ito and Kawamuro proved in \cite{ito-kawamuro} that $\ob$ is not right-veering if and only if there is a transverse overtwisted disk in the contact manifold  $(Y,\xi)$ supported by $\ob$ such that the open book foliation on this disk has exactly one negative elliptic singularity (where the disk intersects the binding). Their proof suggests the following interesting question.

\begin{question}
Does $b(\ob)$ provide a lower bound for the number of negative elliptic singularities of the open book foliation on any transverse overtwisted disk in $(Y,\xi)$?
\end{question}

\subsection{Organization}
We prove Theorem~\ref{thm:main} and Theorem~\ref{thm:b} in Section~\ref{sec:proof}. Section~\ref{sec:cor} contains  the proofs of Corollaries \ref{cor:prime}, \ref{cor:trefoil}, and \ref{cor:general}.

\subsection{Acknowledgements} We thank John Etnyre, Matt Hedden, Sucharit Sarkar, Jeremy van Horn-Morris, Steven Sivek, and Andy Wand for helpful conversations. We also thank  ICERM for hosting us during the 2014 workshop on \emph{Combinatorial Link Homology Theories, Braids, and Contact Geometry}. Most of this work  was carried out during that workshop.

\section{The proof of Theorem~\ref{thm:main}}
\label{sec:proof}

In this section, we give what we feel is a conceptually simple proof of Theorem~\ref{thm:main},  based on the notion of non-right-veering monodromy and the fact that the boundary map in the Heegaard Floer complex squares to zero. We  will assume the reader is   familiar with Heegaard Floer homology and  with Honda, Kazez, and Mati{\'c}'s description of the Ozsv{\'a}th-Szab{\'o} contact invariant in \cite{HKM1}, though we provide a  cursory review below, in part for completeness but largely in order to establish notation and terminology. 
\subsection{Heegaard Floer homology}
To define the Heegaard Floer homology of a closed, oriented 3--manifold $Y$  one starts with a \emph{pointed Heegaard diagram} \[\mathcal{H} = (S,\bs\alpha,\bs\beta,w)\] for $Y$. The  Heegaard Floer chain complex $\CFh(\mathcal{H})$ is the  $\F$-vector space generated by intersection points between the associated tori $\T_{\alpha}$ and $\T_{\beta}$ in $\Sym^g(S)$. The differential \[\partial:\CFh(\mathcal{H})\to\CFh(\mathcal{H})\] is the linear map defined on generators by \[\partial(\mathbf{x}) = \sum_{\mathbf{y}\in\T_{\alpha}\cap\T_{\beta}}\sum_{\substack{\phi\in\pi_2(\mathbf{x},\mathbf{y})\\\mu(\phi)=1\\n_w(\phi)=0}} \#\widehat{\mathcal{M}}(\phi)\cdot \mathbf{y},\] where $\pi_2(\mathbf{x},\mathbf{y})$ is the set of homotopy classes of Whitney disks from $\mathbf{x}$ to $\mathbf{y}$; $\mu(\phi)$ is  the Maslov index of $\phi$; $n_w(\phi)$ is the intersection number \[\phi\cdot (\{w\}\times\Sym^{g-1}(S));\] and $\widehat{\mathcal{M}}(\phi)$ is the space of pseudo-holomorphic representatives of $\phi$ modulo conformal automorphisms of the domain. The Heegaard Floer homology of $Y$ is the homology of this  complex, \[\HFh(Y)=H_*(\CFh(\mathcal{H}),\partial),\] and is an invariant of $Y$. 

\subsection{Knot Floer homology} 
\label{sec:hfk} To define the knot Floer homology of a null-homologous knot $K$ in a closed, oriented 3--manifold  $Y$, one starts with a \emph{doubly-pointed} Heegaard diagram \[\mathcal{H} = (S,\bs\alpha,\bs\beta,z,w)\] for $K\subset Y$. In particular,
\begin{itemize}
\item $(S,\bs\alpha,\bs\beta,w)$ is a pointed Heegaard diagram for $Y$, and
\item if $\gamma_{\alpha}\subset S\ssm\bs\alpha$ is an arc from $z$ to $w$ and  $\gamma_{\beta}\subset S\ssm\bs\beta$ is an arc from $w$ to $z$ then $K$ is the union of the arcs obtained by pushing the interiors of $\gamma_{\alpha}$ and $\gamma_{\beta}$ into the $\alpha$- and $\beta$-handlebodies, respectively.
\end{itemize}
Given a Seifert surface $\Sigma$ for $K$, each generator $\mathbf{x}$ of the Heegaard Floer complex \[\CFh(\mathcal{H}):=\CFh(S,\bs\alpha,\bs\beta,w)\] is assigned an \emph{Alexander grading} \[A_{[\Sigma]}(\mathbf{x}) = \frac{1}{2}\langle c_1(\mathfrak{s}_{\mathbf{x}}), [\Sigma] \rangle \in \Z\] which depends only on the relative homology class of $\Sigma$. For generators $\mathbf{x}$ and $\mathbf{y}$ connected by a Whitney disk $\phi\in\pi_2(\mathbf{x},\mathbf{y})$, the relative Alexander grading is given by \[A_{[\Sigma]}(\mathbf{x})-A_{[\Sigma]}(\mathbf{y})=n_z(\phi)-n_w(\phi).\] 
Let $\mathcal{F}_i$ denote the subspace of $\CFh(\mathcal{H})$ spanned by generators with Alexander grading at most $i$. That $\mathcal{F}_i$ is a subcomplex  follows  the positivity of $n_z(\phi)$ for $\phi$ with  pseudo-holomorphic  representatives and that fact that $\partial$ counts disks with $n_w(\phi)=0$. These subcomplexes  define a filtration \[\cdots \mathscr{F}_i\subset\mathscr{F}_{i+1}\subset\cdots\subset\mathscr{F}_j=\CFh(\mathcal{H})\] whose filtered chain homotopy type is an invariant of $(Y,K,[\Sigma])$. 

The knot Floer chain complex $\CFKh(\mathcal{H})$ is   the associated graded object of this filtration. Equivalently, it is the complex generated by intersection points in $\T_{\alpha}\cap \T_{\beta}$ who differential counts disks as before which satisfy the extra condition that  $n_z(\phi)=0$. The knot Floer homology  of $K$ in Alexander grading $i$ is given by \[\HFKh(Y,K,[\Sigma],i)=H_*(\mathscr{F}_i/\mathscr{F}_{i-1}).\] These groups vanish for $|i|>g(\Sigma)$. Moreover, the Alexander grading is symmetric and detects genus and fiberedness as described in the introduction. It is customary to define the knot Floer homology of $K$ to be the graded group \[\HFKh(Y,K)=\HFKh(\mathcal{H}) := \mathit{H}_*(\CFKh(\mathcal{H}))=\bigoplus_{i=-g(\Sigma)}^{g(\Sigma)}\HFKh(Y,K,[\Sigma],i).\]

We describe below another useful way of computing the relative Alexander grading. Let $\gamma = \gamma_{\alpha}\cup\gamma_{\beta}$ be the union of the arcs defined above. Let $\mathcal{D}_1,\dots,\mathcal{D}_k$ denote the closures of the components of $\Sigma\ssm(\bs\alpha\cup\bs\beta\cup\gamma)$. 
\begin{definition} A \emph{relative periodic domain} is a 2--chain $\mathcal{P} = \sum a_i\mathcal{D}_i$ satisfying \[\partial \mathcal{P} = \ell\gamma + \sum n_i\alpha_i + \sum m_i\beta_i\] for integers $\ell,n_i,m_i$. 
\end{definition} Hedden and Plamenevskaya proved in \cite[Lemma 2.3]{HP} that if $\mathcal{P}$ is a relative periodic domain representing the homology class $[\Sigma]$ then the relative Alexander grading between generators $\mathbf{x},\mathbf{y}\in\T_{\alpha}\cap\T_{\beta}$ is given by \begin{equation}\label{eqn:alex}A_{[\Sigma]}(\mathbf{x})-A_{[\Sigma]}(\mathbf{y}) = n_{\mathbf{x}}(\mathcal{P})-n_{\mathbf{y}}(\mathcal{P}).\end{equation}

\subsection{The contact invariant}
Suppose $(\Sigma,\varphi)$ is an open book decomposition for $Y$. A \emph{basis} for $\Sigma$ is a collection $\{a_1,\dots,a_{2g}\}$ of disjoint, properly embedded arcs in $\Sigma$ whose complement  is a disk. Given such a basis, let $b_i$ be an isotopic copy of $a_i$ obtained by shifting the endpoints of $a_i$ along $\partial\Sigma$ in the direction specified by its orientation, so that $b_i$  intersects $a_i$ transversally in a single point $c_i$, as  shown in Figure~\ref{fig:openbook} in the case $g=1$. Following Honda, Kazez, and Mati{\'c} \cite{HKM1}, we  form  a pointed Heegaard diagram \[(S,\bs\alpha = \{\alpha_1,\dots,\alpha_{2g}\},\bs\beta = \{\beta_1,\dots,\beta_{2g}\},w)\] for $Y$ \emph{adapted to this open book and basis} by ``doubling" the fiber surface and  basis arcs. More precisely: 
\begin{itemize}
\item $S=\Sigma\cup-\Sigma$ is the union of two copies of $\Sigma$ glued along their boundaries,
\item $\alpha_i = a_i\cup a_i$,
\item $\beta_i = b_i \cup \varphi(b_i)$,
\item $w\in \Sigma\subset S$ lies outside of the thin   regions traced   by the isotopies from the $a_i$ to $b_i$,
\end{itemize}
as illustrated  in Figure~\ref{fig:openbook}. 

\begin{figure}[ht]
\labellist
\small \hair 2pt

\pinlabel $\Sigma$ at -8 160
\pinlabel $\Sigma$ at 382 160
\pinlabel $-\Sigma$ at 372 268

\tiny
\pinlabel $w$ at 425 195

\endlabellist
\centering
\includegraphics[width=9cm]{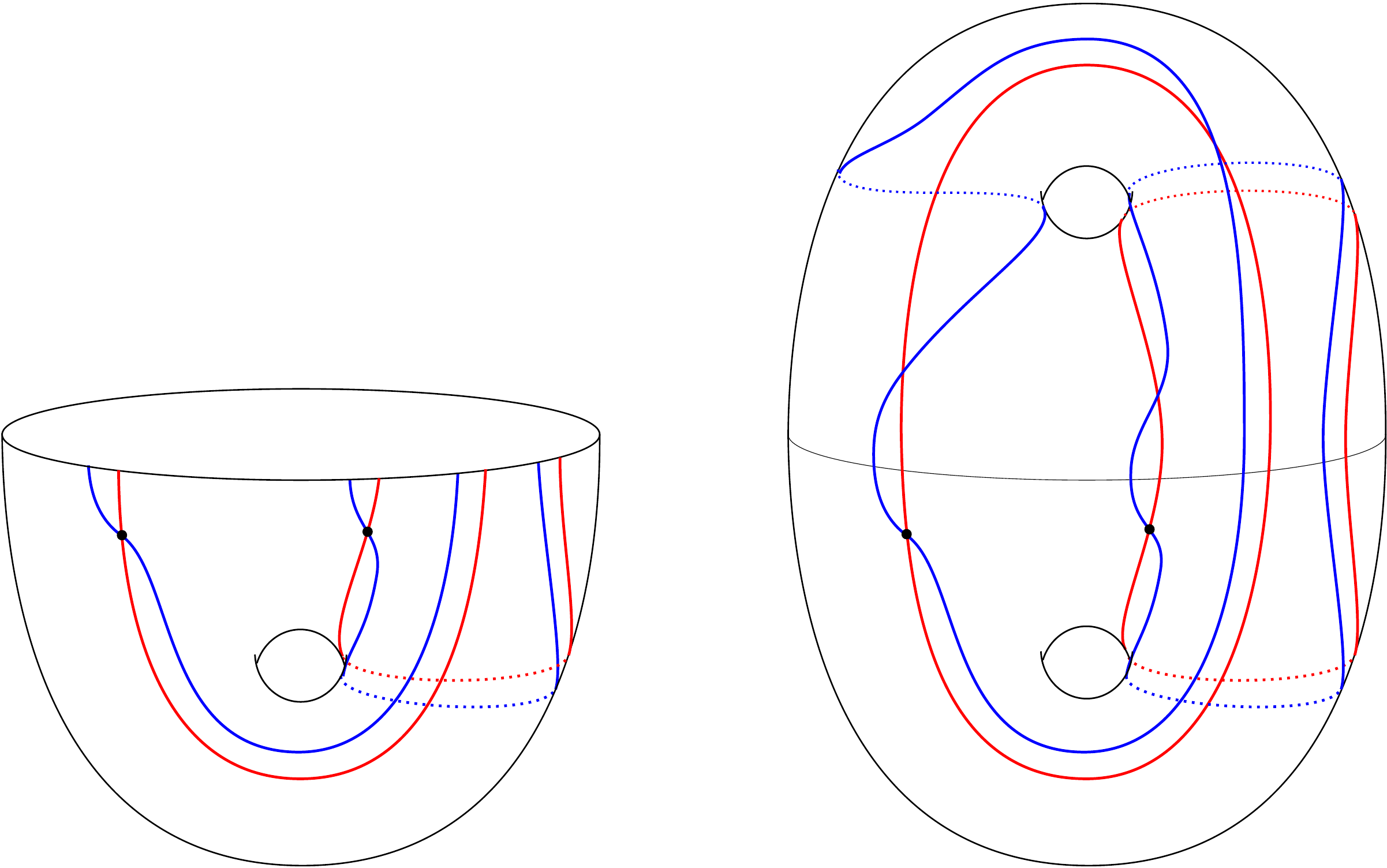}
\caption{Left, the arcs $a_1,a_2$ in red and $b_1,b_2$ in blue. Right, the corresponding  pointed Heegaard diagram with $\alpha_1, \alpha_2$  in red and $\beta_1,\beta_2$  in blue, where $\varphi$ is a left-handed Dehn twist around a curve in $\Sigma$ dual to $a_1$ (which looks like a right-handed Dehn twist on $-\Sigma$). The intersection points $c_1$ and $c_2$ are shown as black dots.
}
\label{fig:openbook}
\end{figure}

Then $\mathcal{H} = (S,\bs\beta,\bs\alpha,w)$ is a pointed Heegaard diagram for $-Y$. Note that the \emph{contact generator} \[\mathbf{c}:=\{c_1,\dots,c_{2g}\}\] is a cycle in $\CFh(\mathcal{H})$.  Moreover, Honda, Kazez, and Mati{\'c} proved in  \cite{HKM1}  that this cycle represents the contact invariant of $\xi$, \[c(\xi)=c(\Sigma,\varphi)=[\mathbf{c}]\in \HFh(-Y),\] originally defined by Ozsv{\'a}th and Szab{\'o} in \cite{OS4}.

\subsection{The proof of Theorem~\ref{thm:main}}
Let us assume henceforth that $K$ is a genus $g>0$ fibered knot in $Y$ with fiber $\Sigma$, as in the hypothesis of Theorem~\ref{thm:main}. Let $(\Sigma,\varphi)$ be an open book corresponding to the fibration of $K$, supporting a contact structure $\xi$ on $Y$. 
Let $(S,\bs\alpha,\bs\beta,w)$ be a pointed Heegaard diagram for $Y$ adapted to this open book and  a basis $\{a_1,\dots,a_{2g}\}$, as described in the previous section.

To turn this diagram into a doubly-pointed Heegaard diagram for $K\subset Y$, we perform finger moves on the $\bs\beta$ curves in a neighborhood of $\partial \Sigma\subset S$, pushing these curves in the direction of the orientation of $\partial \Sigma$, and place  a basepoint $z$  in a region of  $S\ssm (\bs\alpha\cup \bs\beta)$ adjacent to $a_1$, so that  $\partial\Sigma$ is given as the union of an arc $\gamma_\alpha\subset S\ssm\bs\alpha$ from $z$ to $w$ with an arc $\gamma_\beta\subset S\ssm\bs\beta$ from $w$ to $z$, as shown in Figure~\ref{fig:finger}. 

\begin{figure}[ht]
\labellist
\small \hair 2pt

\pinlabel $\Sigma$ at 17 11
\pinlabel $-\Sigma$ at 11 97

\tiny
\pinlabel $w$ at 21 49
\pinlabel $z$ at 68 49
\pinlabel $a_1$ at 75 -4
\pinlabel $c_1$ at 67 13

\pinlabel $d_1$ at 66 88

\endlabellist
\centering
\includegraphics[width=5.3cm]{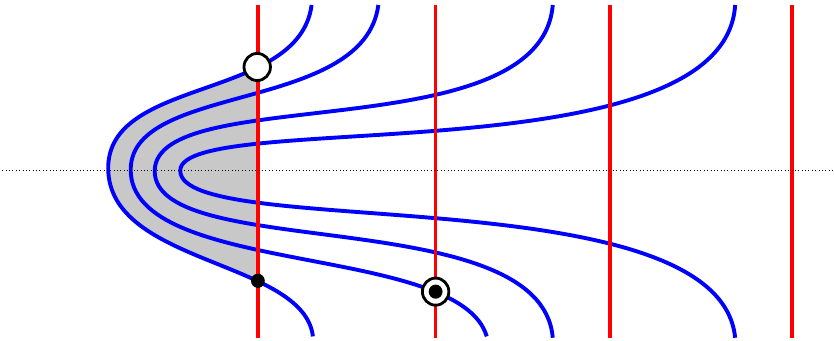}
\caption{Modifying the Heegaard diagram in Figure~\ref{fig:openbook}. The black intersection points comprise $\mathbf{c}$ and the white intersection points comprise $\mathbf{d}$. The bigon from $\mathbf{d}$ to $\mathbf{c}$ is shown in gray.
}
\label{fig:finger}
\end{figure}

We will be most interested in the diagram \[\mathcal{H} = (S,\bs\beta,\bs\alpha,z,w),\] obtained  by reversing the roles of $\bs\alpha$ and $\bs\beta$. This is  a doubly-pointed diagram for   $-K\subset -Y$. We will study the knot Floer homology of $-K$ with respect to the Alexander filtration induced by its Seifert surface $-\Sigma$. Note that the intersection points $c_1,\dots,c_{2g}$ naturally persist in this diagram. Moreover, the contact generator \[\mathbf{c}:=\{c_1,\dots,c_{2g}\}\] is a cycle in the complexes \[\CFKh(\mathcal{H}) \textrm{ and }\CFh(\mathcal{H}):=\CFh(S,\bs\beta,\bs\alpha,w),\] and  still represents the contact invariant, \[c(\xi)=[\mathbf{c}]\in\HFh(-Y),\] as the finger move procedure results in a Heegaard diagram adapted to an open book $(\Sigma,\varphi')$ where $\varphi'$ is simply the composition of $\varphi$ with an isotopy.

The following lemma characterizes the Alexander gradings of generators of $\CFh(\mathcal{H})$.

 \begin{lemma}
 \label{lem:grading} The Alexander grading of a generator $\mathbf{x}$ of $\CFh(\mathcal{H})$ is given by \[A_{[-\Sigma]}(\mathbf{x}) = \frac{1}{2}\cdot(n_{\mathbf{x}}(-\Sigma)-n_{\mathbf{x}}(\Sigma)).\] Equivalently, $A_{[-\Sigma]}(\mathbf{x})$ is equal to the number of components of $\mathbf{x}$ in $-\Sigma\subset S$ minus $g$.
 \end{lemma}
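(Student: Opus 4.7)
The plan is to apply the Hedden--Plamenevskaya formula \eqref{eqn:alex} with a carefully chosen relative periodic domain, and then pin down the resulting universal constant by evaluating on the contact generator $\mathbf{c}$, whose Alexander grading we can read off from the filtration structure already recalled in the paper.

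First I would identify the relative periodic domain. The natural candidate is $\mathcal{P} = -\Sigma$ itself, viewed as the 2--chain on the Heegaard surface $S = \Sigma \cup -\Sigma$ obtained by summing those regions of $S\ssm(\bs\alpha\cup\bs\beta\cup\gamma)$ that lie in the page $-\Sigma$. I would check that $\gamma$ (which is a small perturbation of $\partial\Sigma$, required to miss $\bs\alpha$ and $\bs\beta$ so that $z$ and $w$ can be placed) bounds $\mathcal{P}$ in $S$ with no contributions from any $\alpha_i$ or $\beta_i$, so $\partial\mathcal{P}=\gamma$ with $\ell=1$ and $n_i=m_i=0$. Pushing $\mathcal{P}$ off of $S$ into one of the handlebodies promotes $\gamma$ to a parallel copy of $-K$, exhibiting $\mathcal{P}$ as a representative of the Seifert-surface class $[-\Sigma]\in H_2(-Y,-K)$.

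Next I would compute local multiplicities. Because the finger moves arrange for every point of $\bs\alpha\cap\bs\beta$ to lie in the interior of either $\Sigma$ or $-\Sigma$, the four regions around each intersection point $x_i$ are all on one side of $\gamma$, so $n_{x_i}(\mathcal{P})$ equals $1$ if $x_i\in-\Sigma$ and $0$ otherwise. Summing, $n_{\mathbf{x}}(\mathcal{P}) = n_{\mathbf{x}}(-\Sigma)$ in the notation of the lemma. Equation \eqref{eqn:alex} then gives
\[
A_{[-\Sigma]}(\mathbf{x}) \;=\; n_{\mathbf{x}}(-\Sigma) + c
\]
for some universal constant $c$ independent of the generator. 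To determine $c$, I evaluate at the contact generator $\mathbf{c}$: all of its components lie in $\Sigma$, so $n_{\mathbf{c}}(-\Sigma)=0$, while the filtration structure $\F\langle\mathbf{c}\rangle=\mathscr{F}_{-g}$ recalled earlier forces $A_{[-\Sigma]}(\mathbf{c})=-g$. Hence $c=-g$ and $A_{[-\Sigma]}(\mathbf{x}) = n_{\mathbf{x}}(-\Sigma)-g$. Finally, since every $x_i$ lies in exactly one of the two pages, $n_{\mathbf{x}}(-\Sigma)+n_{\mathbf{x}}(\Sigma)=2g$, and a one-line rearrangement rewrites the formula as $\tfrac{1}{2}(n_{\mathbf{x}}(-\Sigma)-n_{\mathbf{x}}(\Sigma))$, as claimed.

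The only real point to verify carefully is the first step: that $-\Sigma$ genuinely is a relative periodic domain in the sense of the definition after the finger-move modification, and that the push-off procedure realizes the class $[-\Sigma]$. Once this homological bookkeeping is in place, the rest of the argument is a direct application of \eqref{eqn:alex} together with the known filtration normalization coming from the contact generator, and so presents no further difficulty.
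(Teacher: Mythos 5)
Your argument is structurally the same as the paper's: both take the page $-\Sigma\subset S$ as the relative periodic domain, apply the Hedden--Plamenevskaya formula \eqref{eqn:alex} to get $A_{[-\Sigma]}(\mathbf{x})=n_{\mathbf{x}}(-\Sigma)+c$, and then need to pin down the constant $c$. The difference is in how you normalize, and your choice has a problem.

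You pin down $c$ by asserting $A_{[-\Sigma]}(\mathbf{c})=-g$, citing ``the filtration structure $\F\langle\mathbf{c}\rangle=\mathscr{F}_{-g}$ recalled earlier.'' But the introduction's filtration statement is a claim about the filtered chain homotopy type of $\CFh(-Y)$, not a priori a computation of the Alexander grading of the explicit HKM contact generator in the explicit doubly pointed diagram $\mathcal{H}$. In fact, the paper deduces that diagram-specific statement \emph{from} Lemma~\ref{lem:grading} (see Remark~\ref{rmk:filt}), so within the logical flow of the paper your normalization step is circular: you'd be assuming a consequence of the lemma to prove it. One could try to repair this by tracing back to the original Ozsv\'ath--Szab\'o definition of $c(\xi)$ in \cite{OS4}, where the contact generator is shown to sit in extremal Alexander grading, but that requires matching their diagram with HKM's and is not something you establish.

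The paper avoids this entirely. Having observed (as you do) that the maximal Alexander grading $\bar A$ is achieved exactly by generators with all components in $-\Sigma$, the minimal $\underline A$ by generators with all components in $\Sigma$, and $\bar A-\underline A = 2g$, the paper notes that $\HFKh(-Y,-K,[-\Sigma],i)$ is nonzero at $i=\pm g$ because $-K$ is fibered (Ozsv\'ath--Szab\'o's fiberedness result plus symmetry). Since nonvanishing homology at a grading forces a generator there, $\bar A\ge g$ and $\underline A\le -g$, which together with $\bar A-\underline A=2g$ gives $\bar A=g$ and $\underline A=-g$. Swap your last paragraph's normalization for this one and the argument is complete.
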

 
 \begin{proof}
 Note that the region $-\Sigma\subset S$ is a relative periodic domain representing $[-\Sigma]$. The formula \eqref{eqn:alex} for relative Alexander grading implies that the largest Alexander grading $\bar A$ is realized precisely by generators with components   contained entirely in $-\Sigma\subset S$ while the smallest Alexander grading $\underline{A}$ is realized precisely by generators with components contained in $\Sigma\subset S$. In addition, the difference \[\bar A-\underline{A}=2g.\]  The fact that  the knot Floer homology of $-K$ with respect to $[-\Sigma]$ is  nontrivial in gradings $\pm g$ then forces $\bar A= g$. The lemma now follows easily from the formula \eqref{eqn:alex}. 
 \end{proof}

\begin{remark}
\label{rmk:filt}Lemma~\ref{lem:grading} implies that the filtration of $\CFh(\mathcal{H})$ induced by $-K$ and $-\Sigma$ takes the form \[\emptyset = \mathscr{F}_{-g-1}\subset \mathscr{F}_{-g}\subset  \cdots \subset \mathscr{F}_g=\CFh(\mathcal{H}),\] and that the cycle $\mathbf{c}$ is contained in the bottom filtration level $\mathscr{F}_{-g}$. 
\end{remark}

We  claim that the cycle $\mathbf{c}$  represents a nonzero class in knot Floer homology:

\begin{theorem}
\label{thm:nonzero} The class \[[\mathbf{c}]\in\HFKh(-Y,-K,[-\Sigma],-g) = \mathit{H}_*(\mathscr{F}_{-g}/\mathscr{F}_{-g-1})=\mathit{H}_*(\mathscr{F}_{-g})\cong \F\] is always nonzero.
\end{theorem}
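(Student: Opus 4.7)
The plan is to argue that $\mathbf{c}$ is the sole generator of $\mathscr{F}_{-g}$, making the subcomplex one-dimensional and forcing $[\mathbf{c}]$ to generate its homology.

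First I would enumerate the intersections $\bs\alpha\cap\bs\beta$ lying in $\Sigma\subset S$. By construction $\alpha_i|_\Sigma=a_i$ and, before the finger moves, $\beta_i|_\Sigma=b_i$. Since $\{a_i\}$ is a basis---pairwise disjoint arcs cutting $\Sigma$ into a disk---and each $b_i$ is a small transverse isotopic copy of $a_i$ meeting it at the single point $c_i$, we have $a_i\cap b_j=\{c_i\}$ when $i=j$ and $\varnothing$ otherwise. The finger moves push $\bs\beta$ along $\partial\Sigma$ in the direction specified by its orientation so that $z$ can sit adjacent to $a_1$; with this convention the newly created intersections (witness $d_1$ in Figure~\ref{fig:finger}) all lie in $-\Sigma$, so $\bs\alpha\cap\bs\beta\cap\Sigma=\{c_1,\dots,c_{2g}\}$ remains unchanged.

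Next, Lemma~\ref{lem:grading} says $\mathbf{x}\in\T_\alpha\cap\T_\beta$ belongs to $\mathscr{F}_{-g}$ iff each of its $2g$ components lies in $\Sigma$. By the enumeration, every component must be some $c_i$, and since $c_i$ is the unique point of $\alpha_i\cap\beta_i$ in $\Sigma$ and the basis arcs are pairwise disjoint, the only matching giving a legal generator is the identity, yielding $\mathbf{x}=\mathbf{c}$. Thus $\mathscr{F}_{-g}=\F\langle\mathbf{c}\rangle$ is one-dimensional, the restricted differential vanishes for trivial reasons, and $H_*(\mathscr{F}_{-g})=\F\langle[\mathbf{c}]\rangle$. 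This matches the rank-one conclusion of the Ozsv\'ath--Szab\'o fibered knot theorem and gives $[\mathbf{c}]\neq 0$.

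The main obstacle is the directional claim about the finger moves, namely that no new intersection of $\bs\alpha$ with $\bs\beta$ ends up in $\Sigma$. This is a localized check near $\partial\Sigma$ depending on the direction of the push together with the positions of the $\bs\alpha$ endpoints and of the basepoints $w$ and $z$; once verified, the remainder is just the combinatorial observation above.
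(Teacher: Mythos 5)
Your approach is genuinely different from the paper's, and it would give a substantially shorter proof if it worked: the paper establishes the nonvanishing of $[\mathbf{c}]$ via a naturality argument (Lemma~\ref{lem:arcslide} gives independence of basis under arcslides avoiding $z$; a surgery exact triangle shows the cobordism map $f$ is an isomorphism on the bottom Alexander summand when one adds a negative nonseparating Dehn twist; these combine to reduce the claim to the Stein-fillable case $\varphi=\tau_\delta^n$, where $c(\xi)\neq 0$ already in $\HFh(-Y)$). You instead try to show that the chain group $\mathscr{F}_{-g}$ itself is one-dimensional, which would make the statement immediate.

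The gap is precisely the ``directional claim'' you flag, and I don't think it is merely a routine localized check. First, $d_1$ is not a witness for what you want: $d_1$ is an intersection of $\alpha_1$ with $\beta_1$ in $-\Sigma$ produced by the non-right-veering hypothesis ($\varphi(b_1)$ crossing $a_1$ after isotoping on $-\Sigma$ to intersect minimally), not a byproduct of the finger move. Second, and more importantly, the finger moves that sort the $\beta$-endpoints past the $\alpha$-endpoints along $\partial\Sigma$ do, in general, produce new intersection points of $\bs\alpha$ with $\bs\beta$ on \emph{both} sides of $\partial\Sigma$: each time a $\beta$ crossing of $\partial\Sigma$ is pushed past an $\alpha$ crossing, the finger (which itself must straddle $\partial\Sigma$) creates a canceling pair of intersections, one in $\Sigma$ and one in $-\Sigma$. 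So $\bs\alpha\cap\bs\beta\cap\Sigma$ does \emph{not} remain $\{c_1,\dots,c_{2g}\}$ in general. It is conceivable that the permutation constraint on generators still rules out any new generator with all components in $\Sigma$ (e.g.\ because the new intersections all lie in $\alpha_i\cap\beta_j$ with $i\neq j$ in a way that cannot close up to a permutation), but that requires a genuine combinatorial argument about the global arrangement of $\alpha$- and $\beta$-endpoints on $\partial\Sigma$, not a local check near a single finger. As written, the step from ``before finger moves, $\mathscr{F}_{-g}=\F\langle\mathbf{c}\rangle$'' to ``after finger moves, still $\mathscr{F}_{-g}=\F\langle\mathbf{c}\rangle$'' is unsupported, and the paper's more elaborate naturality argument sidesteps exactly this issue by never needing the chain group in the bottom grading to be rank one.
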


We  postpone the proof of Theorem~\ref{thm:nonzero}  until the end of this section. We first show how it  proves Theorems \ref{thm:main} and \ref{thm:b}, beginning with a definition due to Honda, Kazez, and Mati{\'c} \cite{HKM4} and some  remarks.

\begin{definition}
 Given a properly embedded arc $a\subset \Sigma$, we say that $\varphi$ \emph{sends $a$ to the left at an endpoint $p$} if $\varphi(a)$ is not isotopic to $a$ and if, after isotoping $\varphi(a)$ so that it intersects $a$ minimally, $\varphi(a)$ is to the left of $a$ near $p$, as shown in Figure~\ref{fig:left2}. The map $\varphi$ is   \emph{right-veering} if it does not send any  arc to the left at one of its endpoints.
 \end{definition}

\begin{figure}[ht]
\labellist
\small \hair 2pt

\pinlabel $a$ at 47 26
\pinlabel $\varphi(a)$ at 17 26
\pinlabel $p$ at 39 -5
\pinlabel $\Sigma$ at 65 57

\endlabellist
\centering
\includegraphics[width=2cm]{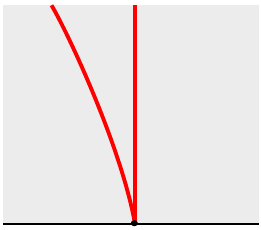}
\caption{$\varphi$ sends $a$ to the left at $p$.
}
\label{fig:left2}
\end{figure}

\begin{remark}
\label{rmk:nrv}For the proof of Theorem~\ref{thm:main}, we may assume without loss of generality that the monodromy $\varphi$ is not right-veering. 
Indeed, suppose first that $\varphi=\id$. Then Theorem~\ref{thm:main} holds by a calculation in \cite[Section 9]{OS3}. If $\varphi\neq \id$ then one of $\varphi$ or $\varphi^{-1}$  is  not right-veering. If $\varphi$ is right-veering then we  use the fact that knot Floer homology is invariant under reversing the orientation of $Y$ and consider instead the knot $K\subset -Y$ with open book $(\Sigma,\varphi^{-1})$. 
\end{remark}
 
\begin{remark}
\label{rmk:symmetry}Since Alexander graded knot Floer homology is symmetric and invariant under reversing the orientation of $Y, K, \Sigma$, \[\HFKh(Y,K,[\Sigma],i) = \HFKh(-Y,-K,[-\Sigma],-i),\]
it suffices for Theorem~\ref{thm:main} to prove that \[\HFKh(-Y,-K,[-\Sigma],1-g)\neq 0.\] \end{remark}

\begin{proof}[Proof of Theorem~\ref{thm:main}]
We will prove that \[\HFKh(-Y,-K,[-\Sigma],1-g)\neq 0\] per Remark~\ref{rmk:symmetry}. We will  also assume that $\varphi$ is not right-veering per Remark~\ref{rmk:nrv}. In \cite[Proof of Lemma 3.2]{HKM1}, Honda, Kazez, and Mati{\'c} show that the fact that $\varphi$ is not right-veering means that there is some \emph{nonseparating} arc $a_1\subset \Sigma$ which is sent to the left by $\varphi$ at one of its endpoints. Since $a_1$ is nonseparating, it can be completed to a basis $\{a_1,\dots,a_{2g}\}$ for $\Sigma$. Let $\mathcal{H} = (S,\bs\beta,\bs\alpha,z,w)$ be the doubly-pointed Heegaard diagram for $-K\subset -Y$ associated to this basis as above.

It is then easy to see that the non-right-veering-ness implies that the curves $\alpha_1$ and $\beta_1$ form a single bigon with corners at intersection points $d_1, c_1\subset \alpha_1\cap\beta_1$, after isotoping these curves on $-\Sigma\subset S$ to intersect minimally, as shown in Figure~\ref{fig:finger}. Consider the generator \[\mathbf{d}=\{d_1,c_2,\dots,c_{2g}\}.\] By Lemma~\ref{lem:grading}, we have that \[A_{[-\Sigma]}(\mathbf{d}) = 1-g.\] Moreover, it is easy to see that the bigon above is the sole contribution to $\partial \mathbf{d}$ by the same reasoning which shows that $\mathbf{c}$ is a cycle (informally, no holomorphic disk avoiding $w$ can have corners at $c_2,\dots,c_{2g}$), so that \[\partial \mathbf{d}=\mathbf{c}.\] Thus, $\mathbf{d}$ is  a cycle in $\mathscr{F}_{1-g}/\mathscr{F}_{-g}$. We claim that $\mathbf{d}$ is not a boundary in this quotient. This will imply that the class \[[\mathbf{d}]\in \HFKh(-Y,-K,[-\Sigma],1-g) = \mathit{H}_*(\mathscr{F}_{1-g}/\mathscr{F}_{-g})\] is nonzero, which will prove Theorem~\ref{thm:main}. 

Suppose for a contradiction that $\mathbf{d}$ is  a boundary in $\mathscr{F}_{1-g}/\mathscr{F}_{-g}$. Then there is a homogeneous chain \[\mathbf{e}\in\CFh(\mathcal{H})\] in Alexander grading $1-g$ such that \[\partial \mathbf{e} = \mathbf{d}+\mathbf{f},\] where $\mathbf{f}$ is a chain in Alexander grading $-g$. But the fact that $\partial\circ\partial=0$ then forces \[\partial(\partial(\mathbf{e}))=\partial \mathbf{d} + \partial \mathbf{f} = \mathbf{c} + \partial \mathbf{f} = 0.\] That is, $\partial \mathbf{f} = \mathbf{c}$. But this contradicts the fact that $\mathbf{c}$ represents a nonzero element in \[\HFKh(-Y,-K,[-\Sigma],-g)=\mathit{H}_*(\mathscr{F}_{-g}),\] as claimed in Theorem~\ref{thm:nonzero}.
\end{proof}

\begin{proof}[Proof of Theorem~\ref{thm:b}]
Let $\ob=(\Sigma,\varphi)$ and suppose $\varphi$ not right-veering. Then $c(\xi)=0$, as follows from the proof of Theorem~\ref{thm:main}  (see \cite[Lemma 3.2]{HKM1} for the original proof). Therefore, per the introduction,  $b(\ob)$ is defined as \[b(\ob)=g+\min\{k\mid [\mathbf{c}]=0 \textrm{ in }\mathit{H}_*(\mathscr{F}_k)\}.\] According to Theorem~\ref{thm:nonzero}, $\mathbf{c}$ represents a nonzero class in $\mathit{H}_*(\mathscr{F}_{-g})$. On the other hand, the proof of Theorem~\ref{thm:main} shows that $[\mathbf{c}]=0$ in $\mathit{H}_*(\mathscr{F}_{1-g})$ since $\mathbf{c}=\partial \mathbf{d}$. Thus, \[b(\ob)=g+1-g=1,\] as claimed.
\end{proof}

\begin{remark}
\label{rmk:idtrivial}
Remark~\ref{rmk:nrv} and the proof of Theorem~\ref{thm:main} show that if $K\subset Y$ is a fibered genus $g>0$ knot with open book $(\Sigma,\varphi)$ and $\varphi$ is not the identity then without loss of generality we can assume that $\varphi$ is not right-veering in which case there is a nontrivial differential \[d_1:\HFKh(-Y,-K,[\Sigma],1-g)\to\HFKh(-Y,-K,[\Sigma],-g)\] in the spectral sequence from $\HFKh(-Y,-K)$ to $\HFKh(-Y)$, sending $[\mathbf{d}]$ to $[\mathbf{c}]$. In particular, \[\rk\HFKh(Y,K) \neq \rk\HFh(Y)\] unless $\varphi=\id$.
\end{remark}

It remains  to prove Theorem~\ref{thm:nonzero}.  As we shall see, this follows from a relatively straightforward lemma. To set the stage for this lemma, recall that any two bases for $\Sigma$ can be obtained from one another by a sequence of \emph{arcslides}, where an arcslide is a modification of a basis in which the foot of one basis arc is slid up and over another basis arc as in Figure~\ref{fig:arcslide} below; see \cite{HKM1} for details. The lemma below asserts that the same holds  even when  we disallow arcslides which pass over a fixed basepoint on $\partial \Sigma$.

\begin{figure}[ht]
\labellist
\tiny \hair 2pt

\pinlabel $a_1$ at 14 128
\pinlabel $a_2$ at 203 128
\pinlabel $\partial \Sigma$ at -11 90
\pinlabel $\Sigma$ at 108 128

\endlabellist
\centering
\includegraphics[width=5.5cm]{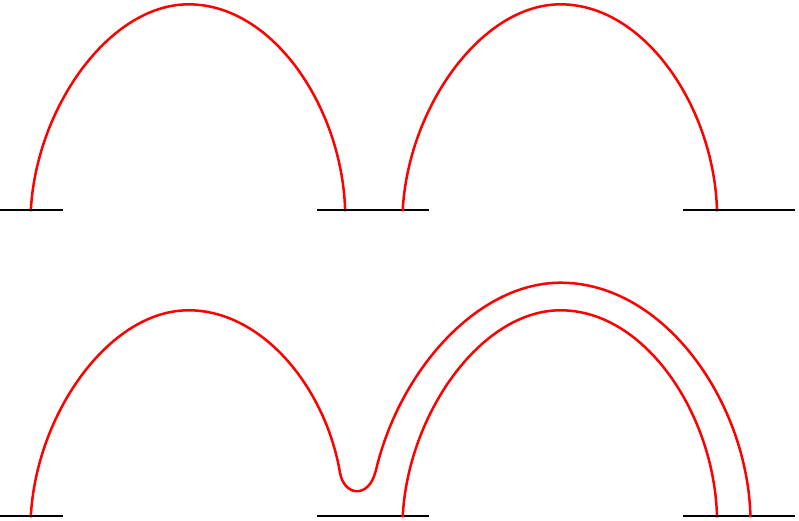}
\caption{Sliding $a_1$ over $a_2$.
}
\label{fig:arcslide}
\end{figure}

\begin{lemma}
\label{lem:arcslide}
Let $z$ be a basepoint on the boundary of the fiber surface $\Sigma$. Suppose \[\mathbf{a}=\{a_1,\dots,a_{2g}\}\textrm{ and } \mathbf{a}'=\{a'_1,\dots,a'_{2g}\}\] are bases for $\Sigma$  disjoint from $z$. Then $\mathbf{a}'$ can be obtained from $\mathbf{a}$ via  a sequence of arcslides taking place in the complement of $z$. 
\end{lemma}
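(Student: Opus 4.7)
The plan is to prove this by combining the unrestricted version of the statement, which is the main arcslide connectivity lemma of Honda--Kazez--Mati\'c~\cite{HKM1}, with an inductive modification argument that pushes each arcslide off of $z$.

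First I would apply the HKM1 result to obtain some sequence of arcslides $\mathbf{a}=\mathbf{a}^{(0)}, \mathbf{a}^{(1)}, \dots, \mathbf{a}^{(N)}=\mathbf{a}'$ connecting the two given bases. By a small isotopy supported near $\partial \Sigma$, we may arrange that every intermediate basis $\mathbf{a}^{(i)}$ is disjoint from $z$. The remaining task is to ensure that each arcslide in the sequence takes place away from $z$, i.e., that the foot-trajectory of the slid arc lies in the interval $I:=\partial \Sigma\setminus \{z\}$.

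Next I would proceed by induction on $g$. For the base case $g=1$, $\Sigma$ is a once-punctured torus, and a basis consists of a pair of properly embedded arcs meeting transversely once whose complement is a disk. The set of such bases disjoint from $z$ is acted on transitively by $\mathrm{Mod}(\Sigma, \partial \Sigma)\cong SL_2(\mathbb{Z})$, generated by two Dehn twists each of which is realized by a single arcslide supported in a neighborhood of two arcs in the interior of $\Sigma$; this neighborhood is automatically disjoint from $z$, so each generator is a $z$-avoiding arcslide and the claim follows.

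For the inductive step, I would reduce to the case where $\mathbf{a}$ and $\mathbf{a}'$ share at least one common arc $a_1 = a_1'$, using $z$-avoiding arcslides to create such a common arc by a standard intersection-reduction argument: among all $z$-disjoint bases isotopic to $\mathbf{a}'$, choose one minimizing geometric intersection with $\mathbf{a}$; if this minimum is positive, a bigon-outermost argument produces a $z$-avoiding arcslide strictly decreasing the intersection. Once $\mathbf{a}$ and $\mathbf{a}'$ share a common arc $a_1$, cutting $\Sigma$ along $a_1$ yields a genus-$(g-1)$ surface $\Sigma'$ whose boundary still contains the marked point $z$, and the remaining arcs of $\mathbf{a}$ and $\mathbf{a}'$ restrict to bases for $\Sigma'$. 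Applying the inductive hypothesis on $\Sigma'$ yields a $z$-avoiding sequence of arcslides there, which lifts directly to a $z$-avoiding sequence on $\Sigma$.

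The main obstacle is the intersection-reduction sub-lemma in the inductive step: showing that whenever $\mathbf{a}$ and $\mathbf{a}'$ meet non-trivially there is a $z$-avoiding arcslide reducing intersection. The subtlety is that the bigon used to guide the slide may sit near $z$, and the slide's foot-trajectory might naively pass over $z$; the fix is that, given such a bigon, one can choose to slide the foot of the offending arc along the interval $I$ in the unique direction avoiding $z$, since the bigon itself is an interior object of $\Sigma$ and never forces the foot to cross the marked point.
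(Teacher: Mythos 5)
Your proposal takes a genuinely different route from the paper, and it contains a real gap in the crucial step. The paper's proof is short and direct: take the unrestricted HKM sequence, and show that the only problematic move (an isotopy that slides a foot across $z$) can be replaced by a $z$-avoiding sequence of arcslides. The key observation is that cutting $\Sigma$ along $a_1, a_1', a_2, \dots, a_{2g}$ produces a polygon $P$ with $z \notin \partial P$ together with a small quadrilateral $Q$ with $z\in\partial Q$; this certifies that $a_1$ can be carried to $a_1'$ by sliding both of its feet the \emph{long way} around $\partial\Sigma$, over every other basis arc exactly once. Your proposal instead attempts an induction on genus with an intersection-reduction sublemma at its core.

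The gap is precisely in that sublemma. You assert that ``given such a bigon, one can choose to slide the foot of the offending arc along the interval $I$ in the unique direction avoiding $z$,'' but this is exactly the nontrivial content that needs to be proven. An arcslide prescribed by an outermost bigon pushes a foot along a \emph{specific} subarc of $\partial\Sigma$; if $z$ lies in that subarc, going ``the other way'' is not a single arcslide at all but a composite move passing over the feet of many other basis arcs, and one must verify that it is a legitimate sequence of arcslides returning an isotopic arc. That verification is precisely what the paper's polygon decomposition does, and your proposal simply asserts it. Saying the bigon ``is an interior object of $\Sigma$'' does not help: arcslides are governed by how the feet travel along $\partial\Sigma$, and nothing prevents the prescribed trajectory from containing $z$.

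There are also smaller issues in the base case: the mapping class group of a once-punctured torus rel boundary is $B_3$, not $SL_2(\mathbb{Z})$ (though the boundary twist acts trivially on bases, so this may be repairable), and the claim that a generator Dehn twist always acts on the \emph{current} basis by a single $z$-avoiding arcslide requires the twisting curve to remain dual to an arc of that basis throughout the word, which is not arranged. Finally, the opening invocation of the HKM sequence is disconnected from the subsequent induction and plays no role in it. The paper's approach is both simpler and avoids all of these problems; if you want to pursue your inductive scheme, the polygon argument from the paper is essentially what you would need to prove the intersection-reduction sublemma anyway.
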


\begin{proof}
Honda, Kazez and Mati\'c showed in \cite[Proposition~3.4]{HKM1} that the there is a sequence of arcslides taking $\mathbf{a}$ to $\mathbf{a}'$. Thus, to prove Lemma~\ref{lem:arcslide}, it suffices to show that any arcslide which involves sliding a foot over the basepoint $z$ can be effectuated by an alternative sequence of arcslides in the complement of $z$.  In fact, it is sufficient to prove a yet simpler statement, that an isotopy of bases which slides the foot of one arc over $z$ can be accomplished by a sequence of arcslides taking place in the complement of $z$.

For the latter, suppose  $a_1\in \mathbf{a}$ is a basis element with  one foot immediately to one side of $z$, as shown in Figure~\ref{fig:basisslide}. Let $a_1'$ be the isotopic copy of $a_1$ obtained by sliding this foot  over $z$, as  in the the figure. To prove the lemma, it suffices to show that $a_1$ can be arcslid over the other $a_j$ in some sequence until the resulting arc is isotopic to $a_1'$, via arcslides in the complement of $z$. For this, it suffices to show that there is a polygon $P$ in $\Sigma$ with boundary $\partial P$ consisting of arcs of $\partial \Sigma$ together with $a_1$ and $a_1'$ as well as some number of copies of the other $a_j$, such that $z\notin\partial P$. But this is plainly obvious, as cutting $\Sigma$ along $a_1,a_1',a_2,\dots,a_{2g}$ yields the disjoint union of such a polygon $P$ together with a quadrilateral $Q$ bounded by $a_1, a_1'$ and two boundary arcs, with $z\in\partial Q$, as shown in Figure~\ref{fig:basisslide}. In particular, $a_1$ can be transformed into $a_1'$ by sliding each of its endpoints exactly once  over every other basis arc, in the complement of $z$.

\begin{figure}[ht]
\labellist
\tiny \hair 2pt

\pinlabel $a_1$ at 6 25
\pinlabel $a_1'$ at 36 25
\pinlabel $\partial \Sigma$ at -9 4
\pinlabel $z$ at 123 -4

\pinlabel $Q$ at 68 70
\pinlabel $P$ at 68 56

\endlabellist
\centering
\includegraphics[width=4cm]{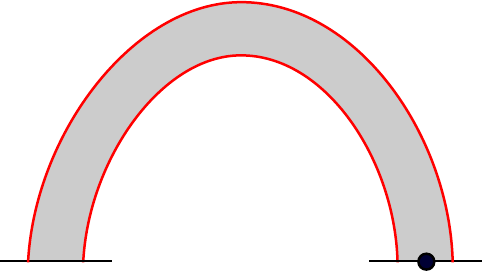}
\caption{The arcs $a_1$ and $a_1'$. The complement   of the quadrilateral $Q$  is a polygon $P\subset \Sigma$ certifying that $a_1$ can be arcslid over the other $a_j$ in the complement of $z$ until the result is isotopic to $a_1'$.
}
\label{fig:basisslide}
\end{figure}

\end{proof}

We may now prove Theorem~\ref{thm:nonzero}.

\begin{proof}[Proof of Theorem~\ref{thm:nonzero}]
We first note that the class $[\mathbf{c}]$ is independent of basis in the following sense. Suppose \[\mathcal{H}=(S,\bs\beta,\bs\alpha,z,w) \textrm{ and }\mathcal{H}'=(S,\bs\beta',\bs\alpha',z,w)\] are  doubly-pointed Heegaard diagrams for $-K\subset -Y$ adapted to the open book $(\Sigma,\varphi)$ and bases $\mathbf{a}$ and $\mathbf{a}'$, where $\mathbf{a}'$ is obtained from $\mathbf{a}$ by a single arcslide in the complement of $z$. Let $\mathbf{c}$ and $\mathbf{c}'$ denote the corresponding contact generators. We claim that there is an isomorphism \[\psi:\HFKh(\mathcal{H})\to\HFKh(\mathcal{H}')\] with \[\psi([\mathbf{c}])=[\mathbf{c}'].\] The proof is identical to that of \cite[Lemma 3.5]{HKM1}. In summary, the diagram $\mathcal{H}'$ is obtained from $\mathcal{H}$ by a pair of handleslides (one for each of  $\bs\alpha$ and $\bs\beta$) in the complement of the basepoints $z$ and $w$. The isomorphism $\psi$ is  induced by the composition of the  holomorphic triangle-counting chain maps associated to these handleslides. A composition of ``small" triangles certifies that the  composite chain map sends the generator $\mathbf{c}$ to $\mathbf{c}'$.

We next note that the class $[\mathbf{c}]$ is natural with respect to the map on knot Floer homology induced by adding  negative Dehn twists to $\varphi$ along nonseparating curves on $\Sigma$. To be precise, suppose  \[\varphi' = \varphi\circ \tau^{-1}_\gamma,\] where $\tau_\gamma$ is a positive Dehn twist around a nonseparating curve $\gamma\subset \Sigma$. Then $(\Sigma, \varphi')$ is an open book decomposition  for the 3--manifold $Y'$  obtained via $(+1)$-surgery on a copy of $\gamma\subset Y$ with respect to the framing  induced by $\Sigma$. Let $K'\subset Y'$ denote the image of $K$ in this surgered manifold, corresponding to the binding of $(\Sigma,\varphi')$. Since $\gamma$ does not link $K$ in $Y$, the cobordism from $Y$ to $Y'$ corresponding to this surgery induces a map \[f:\HFKh(-Y,-K)\to\HFKh(-Y',-K')\] which preserves the Alexander gradings associated to  $-\Sigma$ \cite[Proposition 8.1]{OS3}.  We make two claims about this map below, after reminding the reader  how it is defined.

Since $\gamma$ is nonseparating, we can find a basis $\{a_1,\dots,a_{2g}\}$ for $\Sigma$ such that $\gamma$ intersects $a_1$  in one point and is disjoint from the other $a_i$. Let \[\mathcal{H} = (S,\bs\beta,\bs\alpha,z,w)\] be the doubly-pointed Heegaard diagram for $-K\subset -Y$ adapted to this basis and open book $(\Sigma,\varphi)$. Then one obtains a doubly-pointed Heegaard diagram  \[\mathcal{H}' = (S,\bs\beta',\bs\alpha,z,w)\] for $-K'\subset -Y'$ adapted to this basis and  open book $(\Sigma,\varphi')$ by modifying $\beta_1$ by the negative Dehn twist $\tau_\gamma^{-1}$. Let $\mathbf{c}$ and $\mathbf{c}'$ denote the corresponding contact generators. Our first claim is  that the cobordism map \[f:\HFKh(\mathcal{H})\to\HFKh(\mathcal{H}')\] satisfies \[f([\mathbf{c}])=[\mathbf{c}'].\] The proof is identical to that of \cite[Proposition 3.7]{HKM1}. In summary, the map $f$ is  induced by a  holomorphic triangle-counting chain map. Due to our choice of basis, there is a ``small" triangle which certifies that this chain map sends  $\mathbf{c}$ to $\mathbf{c}'$.

Our second claim is that the restriction \[f:\HFKh(-Y,-K,[-\Sigma],-g)\to\HFKh(-Y',-K',[-\Sigma],-g)\] of $f$ to the summand in the bottommost Alexander grading is an isomorphism. For this, we note that $f$ fits into a surgery exact triangle \begin{equation*}\label{eqn:surgtri} \xymatrix@C=-55pt@R=30pt{
\HFKh(-Y,-K,[-\Sigma],-g) \ar[rr]^{f} & & \HFKh(-Y',-K',[-\Sigma],-g) \ar[dl] \\
& \HFKh(-Y'',-K'',[-\Sigma],-g) \ar[ul], & \\
} \end{equation*}
where $Y''$ is the result of $0$-surgery on $\gamma\subset Y$ and $K''$ is the induced knot. That $f$ is an isomorphism then follows  from the fact that the third group is zero, as $\Sigma$ is homologous in $Y''$ to a surface of genus $g-1$ obtained by cutting $\Sigma$ open along $\gamma$ and capping the new boundary components with disks. Combining the last two claims, we have that $[\mathbf{c}]$ is nonzero if and only if $[\mathbf{c}']$ is.

Let $\tau_\delta$ denote a positive Dehn twists around a curve $\delta$ parallel to $\partial \Sigma$. Note that any other diffeomorphism $\varphi$ of $\Sigma$ is obtained from $\tau_\delta^n$ by adding negative Dehn twists, for some $n>0$. Thus, to complete the proof of Theorem~\ref{thm:nonzero}, it suffices to show that the class \[[\mathbf{c}]\in\HFKh(-Y,-K,[-\Sigma],-g)\] is nonzero in the case that $\varphi = \tau_\delta^n$ for some $n>0$. But this follows immediately from the fact that the class \[c(\xi) = [\mathbf{c}]\in \HFh(-Y)\] is nonzero as $\xi$  Stein fillable in this case \cite{OS4}. To elaborate, suppose $\mathcal{H}$ is a doubly-pointed Heegaard diagram for $K\subset Y$ adapted to $(\Sigma,\varphi)$ and some basis. Recall from Remark~\ref{rmk:filt} that $\mathbf{c}$ is in the bottommost filtration level $\mathscr{F}_{-g}$. If $\mathbf{c}$ is a boundary in \[\mathscr{F}_{-g}/\mathscr{F}_{-g-1} = \mathscr{F}_{-g}\] then  $\mathbf{c}$ is a boundary in $\CFh(\mathcal{H})$ as well.
\end{proof}

\section{The corollaries}
\label{sec:cor}

We prove Corollaries \ref{cor:prime}, \ref{cor:trefoil}, and \ref{cor:general} below.

\begin{proof}[Proof of Corollary~\ref{cor:prime}]
Suppose for a contradiction that  \[K=K_1\#K_2\] is an $L$-space knot which is  a connected sum of nontrivial knots $K_1$ and $K_2$. We know that $K$ is fibered, which implies that each summand is also fibered by \cite{gabai}. Thus,  \begin{equation}\label{eqn:fiberedki}\HFKh(S^3,K_1,g(K_1))\cong \F\cong \HFKh(S^3,K_1,g(K_2)).\end{equation} The K{\"u}nneth formula for knot Floer homology states that \[\HFKh(S^3,K,i)\cong \bigoplus_{i_1+i_2=i}\HFKh(S^3,K_1,i_1)\otimes\HFKh(S^3,K_2,i_2).\] Since $g(K)=g(K_1)+g(K_2)$, the K{\"u}nneth formula together with \eqref{eqn:fiberedki} implies that \[\HFKh(S^3,K,g(K)-1)\cong \HFKh(S^3,K_1,g(K_1)-1)\oplus \HFKh(S^3,K_1,g(K_2)-1).\] Each of the summands on the right is nontrivial by Theorem~\ref{thm:main}, which implies that \[\rk\HFKh(S^3,K,g(K)-1)\geq 2.\] But this violates the constraint \eqref{eqn:constraint} on the knot Floer homology of $K$, a contradiction.
\end{proof}

\begin{proof}[Proof of Corollary~\ref{cor:trefoil}]
 Suppose  that \[\rk \HFKh(S^3,K)=3.\] Then $\HFKh(S^3,K)$ is supported in Alexander gradings $0$ and $\pm g(K)$ by  symmetry  and  genus detection \eqref{eqn:genus}. Note that $g(K)\geq 1$ since  $K$ is otherwise the unknot and \[\rk\HFKh(S^3,K)=1,\] a contradiction. So we have that \[\HFKh(S^3,K,i)\cong \begin{cases}
\F,&i=g(K),\\
\F,&i=0,\\
\F,&i=-g(K).
\end{cases}\] Fiberedness detection \eqref{eqn:fibered} therefore implies that $K$ is fibered.  Theorem~\ref{thm:main} then forces $g(K)=1$. Thus, $K$ is a genus one fibered knot. It follows that $K$ is either a trefoil or the figure eight, but  the knot Floer homology of the latter has rank 5, so $K$ is a trefoil.
\end{proof}

\begin{proof}[Proof of Corollary~\ref{cor:general}] Suppose that \begin{equation}\label{eqn:rank}\rk \HFKh(Y,K)=3.\end{equation} Since $K$ is nullhomologous, there is a spectral sequence with $E_2$ page $\HFKh(Y,K)$ and abutting  to $\HFh(Y)$. It follows that \[\rk\HFh(Y)=1\textrm{ or }3.\] In particular, this implies  that $Y$ is a rational homology 3--sphere.

Suppose for a contradiction that $g(K)=0$.  Then $Y$ cannot be $ S^3$ since that would violate our rank assumption \eqref{eqn:rank}. The knot $K$ is therefore contained in a 3--ball in $Y$ whose boundary does not bound a ball in $Y\smallsetminus K$, violating the irreducibility of the knot complement.

So  $g(K)\geq 1$. Since $K$ has irreducible complement and $Y$ is a rational homology 3--sphere, analogues of the genus and fiberedness detection \eqref{eqn:genus} and \eqref{eqn:fibered}  hold \cite{yi-thurston,yi-fibered}. In particular, Theorem~\ref{thm:main} forces $K$ to be a genus one fibered knot exactly as in the previous proof. 

Observe that $\rk\HFKh(Y,K) \neq \rk\HFh(Y)$ since otherwise the monodromy of $K$ is trivial and $Y \cong \#^2(S^1\times S^2)$ by Remark~\ref{rmk:idtrivial}. Thus, \[\rk\HFh(Y)=1,\] meaning that $Y$ is an integer homology 3--sphere and an $L$-space. The genus one fibered knots in $L$-spaces were classified in \cite[Theorem 4.1]{Ba2} in terms of their monodromies. From that classification, it is also easy to determine which such $L$-spaces are  integer homology 3--spheres. In particular,  $K$ must a fibered knot with fiber a once-punctured torus $T$ with monodromy $\varphi$ one of the following, where $x$ and $y$ represent positive Dehn twists around dual curves in $T$:
\begin{enumerate}
\item $\varphi = xy^{-1}$
\item $\varphi = (xy)^{-3}x^{-3}y^{-1} = (xy)^{-6}xy$
\item $\varphi = x^{-1}y^{-1}$
\item $\varphi = (xy)^{3}x^{-3}y^{-1} = xy$
\item $\varphi=(xy)^6 x^{-1}y^{-1}.$
\end{enumerate}
In the first case, $K$ is the figure eight, a contradiction since $\HFKh$ of the figure eight has rank $5$. In the next four cases, $K$ is, respectively:
\begin{itemize}
\item the core of $-1$-surgery on the right-handed trefoil,
\item the left-handed trefoil,
\item the core of $+1$-surgery on the left-handed trefoil,
\item the right-handed trefoil.
\end{itemize}
Each of these also has $\HFKh$ of rank $3$, as computed in \cite{Ba2}, for example. This completes the proof of Corollary~\ref{cor:general}.
\end{proof}

\bibliographystyle{myalpha}
\bibliography{References}

\providecommand{\bysame}{\leavevmode\hbox to3em{\hrulefill}\thinspace}
\providecommand{\MR}{\relax\ifhmode\unskip\space\fi MR }
\providecommand{\MRhref}[2]{%
  \href{http://www.ams.org/mathscinet-getitem?mr=#1}{#2}
}
\providecommand{\href}[2]{#2}
\begin{thebibliography}{KMVW16}

\bibitem[Bal08]{Ba2}
John~A. Baldwin, \emph{Heegaard {F}loer homology and genus one, one-boundary
  component open books}, J. Topol. \textbf{1} (2008), no.~4, 963--992.

\bibitem[BS]{bs-trefoil}
John~A. Baldwin and Steven Sivek, \emph{Khovanov homology detects the
  trefoils}, Forthcoming.

\bibitem[Gab83]{gabai}
David Gabai, \emph{The {M}urasugi sum is a natural geometric operation},
  Low-dimensional topology ({S}an {F}rancisco, {C}alif., 1981), Contemp. Math.,
  vol.~20, Amer. Math. Soc., Providence, RI, 1983, pp.~131--143.

\bibitem[Ghi08]{ghiggini-fibered}
Paolo Ghiggini, \emph{Knot {F}loer homology detects genus-one fibred knots},
  Amer. J. Math. \textbf{130} (2008), no.~5, 1151--1169.

\bibitem[GHV07]{GHV}
Paolo Ghiggini, Ko~Honda, and Jeremy {Van Horn-Morris}, \emph{The vanishing of
  the contact invariant in the presence of torsion}, Preprint,
  \href{http://arxiv.org/abs/0706.1602v2}{\tt arXiv:0706.1602v2} [math.GT],
  2007.

\bibitem[HKM07]{HKM4}
Ko~Honda, William~H. Kazez, and Gordana Mati{\'c}, \emph{Right-veering
  diffeomorphisms of compact surfaces with boundary}, Invent. Math.
  \textbf{169} (2007), no.~2, 427--449, Zbl 1167.57008.

\bibitem[HKM09]{HKM1}
\bysame, \emph{On the contact class in {H}eegaard {F}loer homology}, J.
  Differential Geom. \textbf{83} (2009), no.~2, 289--311.

\bibitem[HP13]{HP}
Matthew Hedden and Olga Plamenevskaya, \emph{Dehn surgery, rational open books
  and knot {F}loer homology}, Algebr. Geom. Topol. \textbf{13} (2013), no.~3,
  1815--1856.

\bibitem[HS16]{hubbard-saltz}
Diana Hubbard and Adam Saltz, \emph{An annular refinement of the transverse
  element in {K}hovanov homology}, Algebr. Geom. Topol. \textbf{16} (2016),
  no.~4, 2305--2324.

\bibitem[HW14]{HW}
Matthew Hedden and Liam Watson, \emph{On the geography and botany of knot floer
  homology}, Preprint, \href{http://arxiv.org/abs/1404.6913} [math.GT], 2014.

\bibitem[IK14]{ito-kawamuro}
Tetsuya Ito and Keiko Kawamuro, \emph{Visualizing overtwisted discs in open
  books}, Publ. Res. Inst. Math. Sci. \textbf{50} (2014), no.~1, 169--180.

\bibitem[KM11]{km-khovanov}
P.~B. Kronheimer and T.~S. Mrowka, \emph{Khovanov homology is an
  unknot-detector}, Publ. Math. Inst. Hautes \'Etudes Sci. (2011), no.~113,
  97--208.

\bibitem[KMVW16]{kmvw}
C.~{Kutluhan}, G.~{Matic}, J.~{Van Horn-Morris}, and A.~{Wand},
  \emph{{Filtering the Heegaard Floer contact invariant}}, ArXiv e-prints
  (2016).

\bibitem[Krc14]{Kr}
David Krcatovich, \emph{The reduced knot floer complex}, Preprint,
  \href{http://arxiv.org/abs/1310.7624v2}{\tt arXiv:1310.7624v2} [math.GT],
  2014.

\bibitem[LS09]{LS}
Paolo Lisca and Andr{\'a}s~I. Stipsicz, \emph{On the existence of tight contact
  structures on {S}eifert fibered 3-manifolds}, Duke Math. J. \textbf{148}
  (2009), no.~2, 175--209.

\bibitem[LW11]{LW}
Janko Latschev and Chris Wendl, \emph{Algebraic torsion in contact manifolds},
  Geom. Funct. Anal. \textbf{21} (2011), no.~5, 1144--1195, With an appendix by
  Michael Hutchings.

\bibitem[Ni07]{yi-fibered}
Yi~Ni, \emph{Knot {F}loer homology detects fibred knots}, Invent. Math.
  \textbf{170} (2007), no.~3, 577--608.

\bibitem[Ni09]{yi-thurston}
\bysame, \emph{Link {F}loer homology detects the {T}hurston norm}, Geom. Topol.
  \textbf{13} (2009), no.~5, 2991--3019.

\bibitem[OSz05]{OS-lens}
Peter Ozsv\'ath and Zolt\'an Szab\'o, \emph{On knot {F}loer homology and lens
  space surgeries}, Topology \textbf{44} (2005), no.~6, 1281--1300.

\bibitem[OSz06]{OS-dehn}
Peter Ozsv{\'a}th and Zolt{\'a}n Szab{\'o}, \emph{The {D}ehn surgery
  characterization of the trefoil and the figure eight knot}, Preprint,
  \href{arXiv:math/0604079}{\tt arXiv:math/0604079}[math.GT], 2006.

\bibitem[OSz08]{OS-genus}
Peter Ozsv\'ath and Zolt\'an Szab\'o, \emph{Link {F}loer homology and the
  {T}hurston norm}, J. Amer. Math. Soc. \textbf{21} (2008), no.~3, 671--709.

\bibitem[OSz04]{OS3}
Peter Ozsv{\'a}th and Zolt{\'a}n Szab{\'o}, \emph{Holomorphic disks and knot
  invariants}, Adv. Math. \textbf{186} (2004), no.~1, 58--116, Zbl 1062.57019.

\bibitem[OSz05]{OS4}
\bysame, \emph{Heegaard {F}loer homology and contact structures}, Duke Math. J.
  \textbf{129} (2005), no.~1, 39--61.

\bibitem[Ras03]{Ra}
Jacob Rasmussen, \emph{Floer homology and knot complements}, Ph.D. thesis,
  Harvard University, 2003, \href{http://arxiv.org/abs/math/0306378}{\tt
  arXiv:math/0306378v1} [math.GT].

\end{thebibliography}

\end{document}